\newcolumntype{C}[1]{>{\centering\arraybackslash}p{#1}}
\newcolumntype{L}[1]{>{\arraybackslash}p{#1}}
\newcommand{\fns}{\footnotesize}
\newcommand{\msf}{\mathsf}
\newcommand{\imp}{\rightarrow}
\newcommand{\D}{\Diamond}
\newcommand{\B}{\Box}
\newcommand{\mbf}{\mathbf}
\newcommand{\mbb}{\mathbb}
\theoremstyle{plain}
\newtheorem{theorem}{Theorem}[section]
\newtheorem*{theorem*}{Theorem}
\newtheorem*{lemma*}{Lemma}
\theoremstyle{definition}
\newtheorem*{definition*}{Definition}
\newtheorem*{proposition*}{Proposition}
\newtheorem*{corollary*}{Corollary}
\newtheorem*{fact*}{Fact}
\newtheorem*{conjecture*}{Conjecture}
\theoremstyle{remark}
\newtheorem*{remark*}{Remark}
\newtheorem*{example*}{Example}
\newtheorem*{claim*}{Claim}
\theoremstyle{definition}
\newtheorem{thm}[theorem]{Theorem}
\newtheorem{defn}[theorem]{Definition}
\newtheorem{lem}[theorem]{Lemma}
\newtheorem{cor}[theorem]{Corollary}
\newtheorem{prop}[theorem]{Proposition}
\newtheorem{exam}[theorem]{Example}
\newcommand{\den}[1]{\llbracket{#1}\rrbracket}
\newcommand{\weg}[1]{}
\title{Neighbourhood semantics for graded modal logic}
\author[1]{Jinsheng Chen\thanks{Jinsheng Chen is supported by the National Social Science Foundation of China under Grant No. 20\&ZD047.}}
\author[2]{Hans van Ditmarsch}%{hans.van-ditmarsch@loria.fr}
\author[3]{Giuseppe Greco\thanks{Giuseppe Greco is supported by the NWO grant KIVI.2019.001.}}
\author[3]{Apostolos Tzimoulis}
\affil[1]{Zhejiang University, Department of Philosophy}
\affil[2]{LORIA, CNRS, University of Lorraine}
\affil[3]{Vrije Universiteit Amsterdam, School of Business and Economics, Ethics, Governance and Society}
\date{}
\begin{document}
\maketitle
	\begin{abstract}
	We introduce a class of neighbourhood frames for graded modal logic embedding Kripke frames into neighbourhood frames. This class of neighbourhood frames is shown to be first-order definable but not modally definable. We also obtain a new definition of graded bisimulation with respect to Kripke frames by modifying the definition of monotonic bisimulation. \\
	%	The abstract should briefly summarize the contents of the paper and \textbf{should not contain any references}.
	{\em Keywords:} graded modal logic, neighbourhood frames, bisimulation
	%	If needed, it can be split into several paragraphs.
		
	%	The abstract should be followed by a list of keywords. The authors \textbf{must provide at least 3 keywords} which should be typed in lowercase (except for proper names) and separated with commas.
		
	%	The authors can \textbf{optionally} provide the \textit{Mathematical Subject Classification} codes by using the \texttt{msc} environment which has an \textbf{optional argument} indicating the year (the default value is \texttt{2010}). However, it is \textbf{not compulsory}.
	\end{abstract}

%	\begin{msc}[2020]
%		code 1, code 2, code 3
%	\end{msc}

\section{Introduction}

Graded modal logic $\mbf{GrK}$ is an extension of propositional logic with graded modalities $\D_n (n \in \mbb{N})$ that count the number of successors of a given state. The interpretation of formula $\D_n \varphi$ in a Kripke model is that the number of successors that satisfy $\varphi$ is at least $n$.  
Originally introduced in Goble \cite{goble1970grades}, the notion of a graded modality is developed so that `propositions can be distinguished by degrees or grades of necessity or possibility' \cite[Page 1]{goble1970grades}. This language was studied in Kaplan \cite{kaplan1970s5} as an extension of $\mbf{S5}$. Fine \cite{fine1972so}, De Caro \cite{de1988graded} and Cerrato \cite{cerrato1990general} investigated the completeness of $\mbf{GrK}$ and its extensions. 
Van der Hoek \cite{van1992semantics} investigated the expressibility, decidability and definability of graded modal logic and also correspondence theory. Cerrato \cite{cerrato1994decidability} proved the decidability by filtration for graded modal logic.

De Rijke \cite{de2000note} introduced graded tuple bisimulation for graded modal logic. Using this he proved the finite model property  (which was first proved in Cerrato \cite{cerrato1994decidability} via filtration) and that a first-order formula is invariant under graded bisimulation iff it is equivalent to a graded modal formula. Aceto, Ingolfsdottir and Sack \cite{aceto2010resource} showed that resource bisimulation and graded bisimulation coincide over image-finite Kripke frames. Van der Hoek and Meyer \cite{van1992graded} proposed a graded modal logic $\mbf{GrS5}$, which is seen as a graded epistemic logic and is able to express `accepting $\varphi$ if there are at most $n$ exceptions to $\varphi$'. Ma and van Ditmarsch \cite{ma2016dynamic} developed dynamic extensions of graded epistemic logics.

Monotonic modal logics are weakenings of normal modal logics in which the \emph{additivity} ($\D \bot \leftrightarrow \bot$ and $\D p \lor \D q \leftrightarrow \D ( p\lor q)$) of the diamond modality has been weakened to \emph{monotonicity} ($\D p \lor \D q \leftrightarrow \D( p \lor q)$), which can also be formulated as a derivation rule: from $\vdash \varphi \to \psi$ infer $\vdash \D \varphi \to \D \psi$. Monotonic modal logics are interpreted over monotonic neighbourhood frames, that is neighbourhood frames where the collection of neighbourhoods of a point is closed under supersets. There have been many results about monotonic modal logics and monotonic neighbourhood frames \cite{chellas1980modal,hansen2003monotonic,pacuit2017neighborhood},  including model constructions, definability, correspondence theory, canonical model constructions, algebraic duality, coalgebraic semantics, interpolation, simulations of monotonic modal logics by bimodal normal logics, etc.

In this paper, we propose a neighbourhood semantics for graded modal logic. We define an operation $(.)^\bullet$ (Def.~\ref{defn:bullet}) to obtain a class of monotonic neighbourhood frames on which graded modal logic is interpreted. This class of neighbourhood frames is shown to be first-order definable in Section \ref{sec:FOdef} and modally undefinable in Section \ref{sec:modalundef}. In Section \ref{sec:bisimulation} we obtain a new definition of graded bisimulation with respect to Kripke frames by modifying the definition of monotonic bisimulation and show that it is equivalent to the one proposed in \cite{de2000note}. Our results show that techniques for monotonic modal logics can be successfully applied to graded modal logic. %We conclude the paper in section \ref{sec:conclusion}.

%\Acknowledgements{Acknowledgements such as funding information or thanks for reviewers' remarks can be put here.}

\section{Preliminaries}\label{sec:2}
\subsection{Graded modal logic}

{\bf Language}. \ \  Let $\msf{Prop}$ be a set of proposition letters. Language $\mathcal{L}_g$ is defined by induction as follows:
\[
\mathcal{L}_g \ni \varphi ::= p \mid \neg \varphi \mid (\varphi \lor \varphi) \mid \D_n \varphi
\]
where $p \in \msf{Prop}$ and $n \in \mathbb{N}$. We recall that $\mathbb{N}$ is the set of natural numbers. The \emph{complexity} of a formula $\varphi \in \mathcal{L}_g $ is the number of connectives occurring in $\varphi$. 
Other propositional connectives $\bot$, $\top$, $\land$, $\to$, $\leftrightarrow$ are defined as usual. The \emph{dual} of $\D_n \varphi$ is defined as $\B_n \varphi: = \neg \D_n \neg \varphi$. Further, define $\D \varphi : =\D_1 \varphi$ and $\D_{!n} \varphi:= \D_n \varphi \land \neg \D_{n+1} \varphi$. The interpretation of a formula $\D_n \varphi$ in a Kripke model is that the number of successors that satisfy $\varphi$ is at least $n$. The interpretation of formula $\D_{!n} \varphi$ is that the number of successors that satisfy $\varphi$ is exactly $n$.

\noindent {\bf Kripke semantics.} \ \ A \emph{Kripke frame} is a pair $ (W,R)$, denoted $\mathcal{F}$, where $W$ is a set of states and $R$ is a binary relation on $W$. Denote by $\msf{F}_K$ the class of all Kripke frames. A \emph{Kripke model} is a pair $\mathcal{M}=(\mathcal{F},V)$ where $\mathcal{F}$ is a Kripke frame and $V: \msf{Prop} \to \mathcal{P}(W)$ is a \emph{valuation}. For model $\mathcal{M}=(W,R,V)$ and $w \in W$, we call $\mathcal{M}, w$ a \emph{pointed model}.

Given a set $X$, denote by $\abs{X}$ the \emph{cardinality} of $X$. Suppose that $w$ is a state in a Kripke model $\mathcal{M} = (W,R,V)$. The \emph{truth} of a $\mathcal{L}_g$-formula $\varphi$ at $w$ in $\mathcal{M}$, notation $\mathcal{M}, w \Vdash \varphi$, is defined inductively as follows:
\[
\begin{array}{lcl}
\mathcal{M}, w \Vdash p &\text{~iff~}&\qquad  p \in V(p) \\
\mathcal{M}, w \Vdash \neg \psi &\text{~iff~}&\qquad \mathcal{M}, w \not \Vdash \psi\\
\mathcal{M}, w \Vdash \psi_1 \lor \psi_2  &\text{~iff~}&\qquad\mathcal{M}, w \Vdash \psi_1 \text{~or~} \mathcal{M}, w \Vdash \psi_2 \\
\mathcal{M}, w \Vdash \D_n \psi  &\text{~iff~}& \qquad\abs{ R[w] \cap \llbracket \psi \rrbracket_{\mathcal{M}} }\ge n
\end{array}
\]
where $R[w] = \{ v \in W : Rwv\}$ is the set of $w$-successors and  $\llbracket \psi \rrbracket_{\mathcal{M}}= \{v \in W: \mathcal{M} ,v \Vdash \psi\}$ is the \emph{truth set} of $\varphi$ in $\mathcal{M}$. For a set $\Gamma$ of $\mathcal{L}_g$-formulas, we write $\mathcal{M}, w \Vdash \Gamma$ if $\mathcal{M}, w \Vdash \varphi$ for all $\varphi \in \Gamma$. Pointed models $\mathcal{M},w$ and $\mathcal{M}',w'$ are said to be \emph{modally equivalent} (notation: $\mathcal{M},w \equiv_k \mathcal{M}',w'$)  if for all $\mathcal{L}_g$-formulas $\varphi$, we have $\mathcal{M},w\Vdash \varphi $ iff $\mathcal{M}',w'\Vdash \varphi$.

A formula $\varphi$ is \emph{valid at a state $w$ in a frame $\mathcal{F}$}, notation $\mathcal{F},w \Vdash \varphi$, if $\varphi$ is true at $w$ in every model $(\mathcal{F},V)$ based on $\mathcal{F}$; $\varphi$ is \emph{valid in a frame $\mathcal{F}$}, notation $\mathcal{F} \Vdash \varphi$, if it is valid at every state in $\mathcal{F}$; $\varphi$ is  \emph{valid in a class of frames $S_K$}, notation $\Vdash_{S_K} \varphi$, if $\mathcal{F}\Vdash \varphi$ for all $\mathcal{F}\in S_K$.

Let $S_K$ be a class of Kripke frames and $\Gamma \cup\{\varphi\}$ a set of $\mathcal{L}_g$-formulas. We say that $\varphi$ is a \emph{(local) semantic consequence of $\Gamma$ over $S_K$}, notation $\Gamma \Vdash_{S_K} \varphi$, if for all models $\mathcal{M}$ based on frames in $S_K$, and all states in $\mathcal{M}$, if $\mathcal{M}, w\Vdash \Gamma$ then $\mathcal{M}, w \Vdash \varphi$.

\medskip

\noindent {\bf Graded semantics}. \ \ In this subsection, we recall the graded semantics from Ma and van Ditmarsch \cite{ma2016dynamic}. The sum operation and the `greater than or equal to' relation $(\ge)$ are defined over natural numbers $\mathbb{N}$ plus $\omega$, the least ordinal number greater than any natural number, i.e., $\forall n\in \mathbb{N},n < \omega$. Variables $n,m,i ,j$ range over the natural numbers $\mbb{N}$, not over $\mathbb{N}\cup\{\omega\}$.

A \emph{graded frame} is a pair $\mathfrak{f} = (W,\sigma)$, where $W$ is a set of states and $\sigma:W \times W \to \mbb{N}\cup\{\omega\}$ is a function assigning a natural number or $\omega$ to each pair of states. Denote by $\msf{F}_G$ the class of all graded frames. A \emph{graded model} is a pair $\mathfrak{M} = (\mathfrak{f} ,V)$ where $\mathfrak{f}$ is a graded frame and $V: \msf{Prop} \to \mathcal{P}(W)$ is a valuation. 

For $X\subseteq W$ and $w \in W$, define $\sigma (w,X) $ as $\Sigma_{u \in X} \sigma(w,u)$, the sum of $\sigma(w,u)$ for all $u \in X$. In particular, we define $\sigma(w,\emptyset) = 0$. The notation $X\subseteq_{<\omega} W$ represents that $X$ is a finite subset of $W$ and $\mathcal{P}_{<\omega}(W)$ is the set of finite subsets of $W$.

Suppose that $w$ is a state in a graded model $\mathfrak{M} = (W,\sigma,V)$. The \emph{truth} of a $\mathcal{L}_g$-formula $\varphi$ at $w$ in $\mathfrak{M}$, notation $\mathfrak{M}, w \Vdash \varphi$, is defined inductively as follows:
\[
\begin{array}{lcl}
\mathfrak{M},w \Vdash p &\text{~iff~}&\quad \quad w \in V(p)\\
\mathfrak{M},w \Vdash \neg \psi &\text{~iff~}&\quad\quad \mathfrak{M},w \not\Vdash \psi\\
\mathfrak{M},w \Vdash \psi_1 \lor \psi_2 &\text{~iff~}&\quad \quad \mathfrak{M},w \Vdash \psi_1 \text{~or~} \mathfrak{M},w \Vdash \psi_2\\
\mathfrak{M},w \Vdash \D_n \psi &\text{~iff~}&\quad\quad \exists X \subseteq_{<\omega} W \ (\sigma(w,X) \ge n \ \& \ X \subseteq \llbracket \psi \rrbracket_{\mathfrak{M}})
\end{array}
\]

To our knowledge, graded frames first appeared in \cite{de1988graded} as an intermediate structure to prove completeness of $\mbf{GrK}$ with respect to Kripke frames. They are called \emph{multiframes} in \cite{aceto2010resource}. Graded frames are alternative semantics for graded modal logic, indeed each graded frame can be associated with a Kripke frame validating the same formulas, and vice versa as follows (cf. \cite[Proposition 2.12 ]{ma2016dynamic}): Given a Kripke frame $\mathcal{F}=(W,R)$, the associated graded frame $\mathcal{F}^\circ =(W,\sigma)$ is defined by setting $\sigma(w,u)= 1$ if $wRu$, and $\sigma(w,u)= 0$ otherwise; given a graded frame $\mathcal{F}=(W, \sigma)$, the associated Kripke frame $\mathcal{F}_\circ= (W_\circ,R)$ is defined by setting $W_\circ =\{(w,i) \mid w \in W \ \& \ i\in \mathbb{N}\cup\{\omega\}\}$ and $(w,i) R (u,j)$ iff $\sigma(w,u)\ge j >0$.

\medskip

\noindent {\bf Axiomatization}. \ \ The \emph{minimal graded modal logic} $\mbf{GrK}$ consists of the following axiom schemas and inference rules:
\begin{align*}
&(Ax1)~~ \text{all instances of propositional tautologies}\\
&(Ax2)~~ \D_0 \varphi \leftrightarrow \top\\
&(Ax3)~~ \D_n \bot \leftrightarrow \bot \qquad(n > 0)\\
&(Ax4)~~ \D_{n+1} \varphi \to  \D_n \varphi\\
&(Ax5)~~ \Box(\varphi \to \psi ) \to( \D_n \varphi \to  \D_n \psi)\\
&(Ax6)~~ \neg \D (\varphi \land \psi)\land  \D_{!m} \varphi \land  \D_{!n}\psi\to  \D_{!(m+n)}(\varphi \lor \psi)\\
&(MP)~~ \text{from~} \varphi \text{~and ~}\varphi \to\psi \text{~infer~} \psi\\
&(Gen)~~ \text{from~} \varphi \text{~infer~} \B \varphi
\end{align*}

The set of theorems derivable in the system $\mbf{GrK}$ is also called $\mbf{GrK}$. A \emph{graded modal logic} is a set $\Lambda$ of $\mathcal{L}_g$-formulas with $\mbf{Grk} \subseteq \Lambda$. \weg{For $\varphi \in \Lambda$, we write $\vdash_\Lambda \varphi$. If $\Gamma$ is a set of axioms, then $\mbf{GrK\Gamma}$ is the logic obtained by adding axioms in $\Gamma$ as axiom schemas to $\mbf{GrK}$.} If $\varphi \in \Lambda$, we write $\vdash_\Lambda \varphi$.

\begin{thm}[\cite{de1988graded}]\label{thm:2106}
$\mbf{GrK}$ is sound and complete with respect to the class of all Kripke frames. 
\end{thm}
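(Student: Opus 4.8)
The plan is to prove the two halves separately, routing completeness through the graded (multiframe) semantics recalled above and then transporting the resulting countermodel to a Kripke frame via the correspondence $(.)_\circ$ of \cite[Proposition~2.12]{ma2016dynamic}.

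For soundness it suffices to check that each axiom is valid on every Kripke frame and that $(MP)$ and $(Gen)$ preserve frame validity; this is immediate from the cardinality clause for $\D_n$. Indeed $(Ax2)$--$(Ax4)$ are the boundary and monotonicity facts about $\abs{R[w]\cap\llbracket\varphi\rrbracket}$; $(Ax5)$ holds because $\B(\varphi\imp\psi)$ forces $R[w]\cap\llbracket\varphi\rrbracket\sub R[w]\cap\llbracket\psi\rrbracket$; and $(Ax6)$ is exactly additivity of cardinality over the \emph{disjoint} sets $R[w]\cap\llbracket\varphi\rrbracket$ and $R[w]\cap\llbracket\psi\rrbracket$, their disjointness being guaranteed by the conjunct $\neg\D(\varphi\land\psi)$. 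Finally $(Gen)$ preserves validity because a globally true formula holds in particular at every successor.

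For completeness I would argue contrapositively: if $\not\vdash\varphi$ then $\{\neg\varphi\}$ is $\mbf{GrK}$-consistent, and I construct a \emph{canonical graded model} $\mathfrak{M}^c$ refuting $\varphi$. Take the maximal $\mbf{GrK}$-consistent sets as states, and for a state $\Gamma$ set $d_\Gamma(\psi):=\sup\{n : \D_n\psi\in\Gamma\}\in\mbb{N}\cup\{\omega\}$. The axioms turn $d_\Gamma$ into a finitely additive $(\mbb{N}\cup\{\omega\})$-valued measure on the Lindenbaum--Tarski algebra: $(Ax3)$ gives $d_\Gamma(\bot)=0$, $(Ax4)$ makes the supremum well defined, $(Ax5)$ with $(Gen)$ yields monotonicity under provable implication (so $d_\Gamma$ descends to the algebra), and $(Ax6)$ gives $d_\Gamma(\varphi\lor\psi)=d_\Gamma(\varphi)+d_\Gamma(\psi)$ whenever $\vdash\neg(\varphi\land\psi)$. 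It then remains to define a grading $\sigma^c$ on the states so that for every state $\Gamma$ and formula $\psi$ the supremum of $\sigma^c(\Gamma,X)$ over finite sets $X$ of successor states all satisfying $\psi$ equals $d_\Gamma(\psi)$; granting this, the truth lemma $\mathfrak{M}^c,\Gamma\Vdash\chi\iff\chi\in\Gamma$ follows by the usual induction, whose only nontrivial case $\chi=\D_n\psi$ unwinds to precisely this condition against the finite-witness clause of the graded semantics.

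I expect the construction of $\sigma^c$ to be the crux. One must represent the measure $d_\Gamma$ as a weighted count over maximal consistent sets, i.e.\ realize each value $d_\Gamma(\psi)$ as a (possibly infinite) sum of multiplicities attached to the individual successor states lying in $\llbracket\psi\rrbracket$, coherently for all formulas simultaneously. The natural route is to work first inside the finite Boolean algebra generated by the subformulas relevant to a given target formula, where additivity lets one spread the values across the atoms, and then to reconcile these finite approximations using the compatibility forced by monotonicity and additivity, the value $\omega$ being absorbed by the finite-subset witnessing already built into the clause for $\D_n$. Once the truth lemma holds, the underlying canonical graded frame refutes $\varphi$ at $\Gamma$; since by \cite[Proposition~2.12]{ma2016dynamic} a formula is valid on all graded frames iff it is valid on all Kripke frames, passing to the associated Kripke frame via $(.)_\circ$ produces a Kripke countermodel, so $\not\Vdash_{\msf{F}_K}\varphi$, as required.
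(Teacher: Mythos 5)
The paper offers no proof of this theorem to compare against: it is quoted from De Caro \cite{de1988graded}, the completeness of $\mbf{GrK}$ over Kripke frames being a classical (and notoriously delicate) result of Fine, De Caro and Cerrato. Your soundness half is fine and is the routine argument: each axiom is checked against the cardinality clause for $\D_n$, with the disjointness needed for $(Ax6)$ extracted from the conjunct $\neg\D(\varphi\land\psi)$.

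The completeness half, however, has a genuine gap, and it sits exactly where you say you ``expect the crux'' to be: the construction of the canonical grading $\sigma^c$. Everything before that point (maximal consistent sets as states, the counting function $d_\Gamma(\psi)=\sup\{n:\D_n\psi\in\Gamma\}$, its additivity on provably disjoint formulas) is preparation, and everything after it (the truth lemma, transport to a Kripke countermodel via $(.)_\circ$) is routine; but realizing each $d_\Gamma$ as a multiplicity function on successor states, coherently for all formulas simultaneously, \emph{is} the mathematical content of the theorem, and your sketch does not establish it. The proposed reconciliation of finite approximations is precisely where the difficulty lives: the atoms of the Boolean subalgebra generated by a larger set of subformulas refine those of a smaller one, the coarser value-spreading constrains but does not determine the finer one, and one needs a genuine argument (a K\"onig-type or inverse-limit argument, or the careful syntactic counting of De Caro and Cerrato --- note that Fine's original 1972 proof required repair on essentially this point) that compatible choices can be threaded into a single $\sigma^c$ on pairs of maximal consistent sets. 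Two smaller remarks: $(Ax6)$ yields additivity of $d_\Gamma$ only when both values are finite, since it is phrased with $\D_{!m}$ and $\D_{!n}$, so the $\omega$ cases need the separate monotonicity argument; and your route in fact factors through the paper's Theorem~\ref{theo:completeGraded} (completeness with respect to graded frames, cited from Ma and van Ditmarsch) --- invoking that theorem together with the frame correspondence would close the argument, but as a from-scratch proof your decisive lemma is conjectured, not proved.
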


\begin{thm}[Theorem 3.2 of \cite{ma2016dynamic}]\label{theo:completeGraded}
$\mbf{GrK}$ is sound and complete with respect to the class of all graded frames.
\end{thm}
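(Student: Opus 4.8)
The plan is to reduce the statement to the Kripke completeness already recorded in Theorem~\ref{thm:2106}, exploiting the two frame constructions $\mathcal{F}\mapsto\mathcal{F}^\circ$ and $\mathfrak{f}\mapsto\mathfrak{f}_\circ$ described above. It suffices to establish two pointwise truth-transfer lemmas, and then soundness and completeness each follow from one of them together with the matching half of Theorem~\ref{thm:2106}. For \emph{completeness} I argue contrapositively: if $\not\vdash_{\mbf{GrK}}\varphi$, then by Theorem~\ref{thm:2106} there is a Kripke model $(\mathcal{F},V),w$ refuting $\varphi$; transferring along $(.)^\circ$ yields a graded model refuting $\varphi$, so $\varphi$ is not valid on all graded frames. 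For \emph{soundness} I use the opposite transfer: given a graded model $(\mathfrak{f},V),w$, I pass to the associated Kripke model on $\mathfrak{f}_\circ$, where every theorem of $\mbf{GrK}$ is true by the soundness half of Theorem~\ref{thm:2106}, and pull the truth back.

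The first lemma (Kripke to graded) is the routine one. For a Kripke frame $\mathcal{F}=(W,R)$ and any valuation $V$, I claim $(\mathcal{F},V),w\Vdash\varphi$ iff $(\mathcal{F}^\circ,V),w\Vdash\varphi$, proved by induction on $\varphi$ with the Boolean cases immediate. In the case $\D_n\psi$, since $\sigma$ is the $\{0,1\}$-valued characteristic function of $R$ we have $\sigma(w,X)=\abs{R[w]\cap X}$ for every finite $X$; hence the graded clause ``$\exists X\sub_{<\omega}W$ with $X\sub\val{\psi}$ and $\sigma(w,X)\ge n$'' holds iff $\abs{R[w]\cap\val{\psi}}\ge n$, which is exactly the Kripke clause. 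Because $\mathcal{F}$ and $\mathcal{F}^\circ$ share the same carrier $W$ and I use the same valuation, this immediately lifts to $\mathcal{F}^\circ\Vdash\varphi\Rightarrow\mathcal{F}\Vdash\varphi$, which is what completeness needs.

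The second lemma (graded to Kripke) is where the real work lies. Given a graded model $(\mathfrak{f},V)$ with $\mathfrak{f}=(W,\sigma)$, I lift $V$ to $V_\circ$ on $W_\circ=\{(w,i)\}$ by declaring $(w,i)\in V_\circ(p)$ iff $w\in V(p)$, and prove by induction that $(\mathfrak{f},V),w\Vdash\varphi$ iff $(\mathfrak{f}_\circ,V_\circ),(w,i)\Vdash\varphi$ for every $i$ (note the successors of $(w,i)$ do not depend on $i$). The crux is again the case $\D_n\psi$: one must count the successors of $(w,i)$ satisfying $\psi$, which by the induction hypothesis are exactly the $(u,j)$ with $u\in\val{\psi}$ and $0<j\le\sigma(w,u)$, so their number is $\sum_{u\in\val{\psi}}\abs{\{j: 0<j\le\sigma(w,u)\}}=\sigma(w,\val{\psi})$, a summand being infinite precisely when $\sigma(w,u)=\omega$. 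The main obstacle is to reconcile the existential ``finite witness'' form of the graded clause with this possibly infinite sum and the $\omega$ labels: because $n$ is a finite natural number, $\sigma(w,\val{\psi})\ge n$ holds iff some finite $X\sub\val{\psi}$ already has $\sigma(w,X)\ge n$, and the number of $\psi$-successors of $(w,i)$ is $\ge n$ under exactly the same condition. Granting this, the two $\D_n\psi$-clauses coincide, and lifting to frames gives $\mathfrak{f}_\circ\Vdash\varphi\Rightarrow\mathfrak{f}\Vdash\varphi$; combined with the first lemma and Theorem~\ref{thm:2106}, both soundness and completeness with respect to $\msf{F}_G$ follow. (Soundness could alternatively be obtained directly by checking that each of $(Ax1)$--$(Ax6)$ is valid on every graded frame and that $(MP)$ and $(Gen)$ preserve validity, but the transfer argument is more economical.)
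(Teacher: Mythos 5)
The paper does not contain its own proof of this theorem: it is imported verbatim from \cite{ma2016dynamic} (their Theorem 3.2), and the only proof-relevant material in the paper is the pair of constructions $\mathcal{F}\mapsto\mathcal{F}^\circ$ and $\mathfrak{f}\mapsto\mathfrak{f}_\circ$ recalled just before the statement (citing Proposition 2.12 of that reference). Your proposal is correct and carries out exactly the reduction those constructions suggest --- two pointwise truth-transfer lemmas (with the right care about $\omega$-weights and the finiteness of $n$ in the $\D_n\psi$ clause), combined with Theorem~\ref{thm:2106} --- so it matches the route the paper relies on rather than diverging from it.
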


\subsection{Monotonic modal logic}\label{section:classical}

We consider monotonic modal logic with modalities parametrized by natural numbers, i.e.~$\D_n$ and $\B_n$ with $n\in \mbb{N}$ instead of the usual single modality. As there is no interaction between different $\D_n$ and $\D_m$, the logic for such modalities is not essentially different from the logic for a single modality $\D$ that was originally proposed. 
%\footnote{Notice that in \cite{hansen2003monotonic} the symbol $\nabla_n$ is used, and in in \cite{chellas1980modal} the symbol $\B_n$ is used.}

First, a word on notation. In graded modal logic $\D_n$ denotes the existence of at least $n$ worlds. So in particular $\D$ denotes the existence of at least one world. Whereas in monotonic logic the existence of a neighbourhood is denoted by $\B$ \cite{chellas1980modal} or $\nabla$ \cite{hansen2003monotonic}. We prefer to stick to the notation matching usage in graded modal logic. Therefore also in monotonic modal logic write $\D$ (or $\D_n$) to denote the existence of a neighbourhood instead of $\B$ or $\nabla$ ($\B_n$ or $\nabla_n$). Consequently, the duals of modalities are also swapped.

\medskip

\noindent {\bf Neighbourhood Semantics}.  A \emph{neighbourhood frame} is a tuple $\mathbb{F}= (W,\{\nu_n\}_{n \in \mbb{N}})$ where $W$ is a set of states and each $\nu_n : W\to \mathcal{P}\mathcal{P}(W)$, called \emph{neighbourhood function}.  Denote by $\msf{F}_N$ the class of all neighbourhood frames. A \emph{neighbourhood model} is a pair $\mathbb{M} =(\mathbb{F}, V)$, where $\mathbb{F}$ is a neighbourhood frame and $V:\msf{Prop}\to \mathcal{P}(W)$ is a valuation.

The \emph{truth of a $\mathcal{L}_g$-formula $\varphi$ at a state $w$ of a neighbourhood model $\mathbb{M}=(\mathbb{F},V)$}, notation, $\mathbb{M},w\Vdash \varphi$, is defined inductively as follows, where $n \in \mbb{N}$:
\[
\begin{array}{lcl}
\mathbb{M}, w \Vdash p &\text{~iff~}&  p \in V(p) \\
\mathbb{M}, w\Vdash \neg \psi &\text{~iff~}& \mathbb{M}, w \not \Vdash \psi\\
\mathbb{M}, w \Vdash \psi_1 \lor \psi_2  &\text{~iff~}&\mathbb{M}, w \Vdash \psi_1 \text{~or~} \mathbb{M}, w \Vdash \psi_2 \\
\mathbb{M}, w \Vdash \D_n\psi  &\text{~iff~} & \llbracket \psi \rrbracket_{\mathbb{M}} \in \nu_n(w) 
\end{array}
\]
As an example, Figure \ref{fig:0113} depicts a Kripke model, graded model and a neighbourhood model which all make $\D_3 p$ true.

%\begin{rem}
%In neighbourhood semantics and monotonic logics different symbols are used for the modality $\D_n$, without any change in its meaning. In \cite{hansen2003monotonic} the symbol $\nabla_n$ is used, and in in \cite{chellas1980modal} the symbol $\B_n$ is used. Although these are more common, we prefer to stick to the same name for the modality in graded modal logic and in monotonic modal logic.
%\end{rem}

A neighbourhood function $\nu:W \to \mathcal{P}\mathcal{P}(W)$ is \emph{supplemented} or \emph{closed under supersets} if for all $w \in W$ and $X \subseteq W$, $X\in \nu(w)$ and $X\subseteq Y$ imply $Y \in \nu(w)$. A neighbourhood frame $\mathbb{F}= (W, \{\nu_n\}_{n \in \mbb{N}})$ is \emph{monotonic} if each $\nu_n$ is supplemented. A neighbourhood model $\mathbb{M} = (\mathbb{F},V)$ is \emph{monotonic} if $\mathbb{F}$ is monotonic. Denote by $\msf{F}_M$ the class of all monotonic neighbourhood frames.  Monotonic pointed models $\mathbb{M},w$ and $\mathbb{M}',w'$ are said to be \emph{modally equivalent} 
%(notation: $\mathbb{M},w \equiv_m \mathbb{M}',w'$) 
 if for all $\mathcal{L}_g$-formulas $\varphi$, we have $\mathbb{M},w\Vdash \varphi$ iff $\mathbb{M}',w' \Vdash \varphi$. For monotonic model $\mathbb{M}$, we have
\[
\mathbb{M}, w \Vdash \D_n \varphi \quad \text{~iff~} \quad
\exists X( X\in \nu_n(w) \ \& \ X \subseteq \llbracket \varphi \rrbracket_{\mathbb{M}}).
\]

\medskip

\noindent {\bf Axiomatization}. \ \ The \emph{minimal monotonic modal logic} $\mbf{M_\mbb{N}}$ consists of the following axioms and inference rules, where $n \in \mbb{N}$:
\begin{align*}
&(Ax1)~ \text{all instances of propositional tautologies}\\
&(MP) ~\text{from~} \varphi \text{~and ~}\varphi \to\psi \text{~infer~} \psi\\
&(RM_n) ~\text{from~} \varphi \to \psi \text{~infer~} \D_n \varphi \to \D_n \psi
\end{align*}

 The set of theorems derivable in the system $\mbf{M_\mbb{N}}$ is also called $\mbf{M_\mbb{N}}$. A \emph{monotonic modal logic} is a set $\Lambda$ of $\mathcal{L}_\mbb{N}$-formulas with $\mbf{M_\mbb{N}} \subseteq \Lambda$.
If $\varphi \in \Lambda$, we write $\vdash_\Lambda \varphi$.
\begin{thm}[Theorem 2.41 of \cite{pacuit2017neighborhood}]
$\mbf{M_\mbb{N}}$ is sound and strongly complete with respect to $\msf{F}_M$.  
\end{thm}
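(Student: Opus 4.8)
The plan is to prove soundness by a routine induction on derivations and strong completeness by a canonical-model construction adapted to the monotonic (supplemented) setting, following the usual pattern for minimal monotonic logics but carried out uniformly over all the indices $n$.

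For soundness I would show that every axiom of $\mbf{M_\mbb{N}}$ is valid on each monotonic frame and that $(MP)$ and $(RM_n)$ preserve validity. The only non-propositional point is $(RM_n)$, and this is exactly where supplementation is used: assuming $\varphi \to \psi$ is valid, so that $\llbracket \varphi \rrbracket_{\mathbb{M}} \subseteq \llbracket \psi \rrbracket_{\mathbb{M}}$ in every monotonic model $\mathbb{M}$, if $\mathbb{M}, w \Vdash \D_n \varphi$ then $\llbracket \varphi \rrbracket_{\mathbb{M}} \in \nu_n(w)$, and closure of $\nu_n$ under supersets yields $\llbracket \psi \rrbracket_{\mathbb{M}} \in \nu_n(w)$, i.e.\ $\mathbb{M}, w \Vdash \D_n \psi$. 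Since the indices $n$ do not interact, each $(RM_n)$ is handled independently.

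For strong completeness it suffices, by contraposition together with Lindenbaum's lemma, to satisfy every $\mbf{M_\mbb{N}}$-consistent set in a monotonic model. I would build the canonical model $\mathbb{M}^{c}=(W^{c},\{\nu^{c}_{n}\}_{n\in\mbb{N}},V^{c})$, where $W^{c}$ is the set of maximal $\mbf{M_\mbb{N}}$-consistent sets, $V^{c}(p)=\{\Gamma:p\in\Gamma\}$, and, writing $|\varphi|:=\{\Gamma\in W^{c}:\varphi\in\Gamma\}$ for the proof set, the neighbourhood function is taken in its supplemented form
\[
\nu^{c}_{n}(\Gamma):=\{\,X\subseteq W^{c}: |\psi|\subseteq X \text{ for some } \psi \text{ with } \D_n\psi\in\Gamma\,\}.
\]
By construction each $\nu^{c}_{n}$ is closed under supersets, so $\mathbb{M}^{c}\in\msf{F}_M$. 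The heart of the matter is the Truth Lemma $\mathbb{M}^{c},\Gamma\Vdash\varphi \iff \varphi\in\Gamma$, proved by induction on $\varphi$; the propositional cases are standard. For the case $\D_n\psi$ I would use the supplemented truth clause recalled in the text, namely $\mathbb{M}^{c},\Gamma\Vdash\D_n\psi$ iff some $X\in\nu^{c}_{n}(\Gamma)$ satisfies $X\subseteq\llbracket\psi\rrbracket_{\mathbb{M}^{c}}$, together with the induction hypothesis $\llbracket\psi\rrbracket_{\mathbb{M}^{c}}=|\psi|$. If $\D_n\psi\in\Gamma$ then $|\psi|\in\nu^{c}_{n}(\Gamma)$ and $|\psi|\subseteq|\psi|$, giving one direction.

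The converse direction is the key step and the main obstacle: one must verify that supplementing $\nu^{c}_{n}$ — which is forced on us in order to land inside $\msf{F}_M$ — does not overgenerate. Suppose some $X\in\nu^{c}_{n}(\Gamma)$ has $X\subseteq|\psi|$; by definition there is $\chi$ with $\D_n\chi\in\Gamma$ and $|\chi|\subseteq X\subseteq|\psi|$. The crucial auxiliary fact is that $|\chi|\subseteq|\psi|$ implies $\vdash_{\mbf{M_\mbb{N}}}\chi\to\psi$: otherwise $\chi\wedge\neg\psi$ is consistent and extends, by Lindenbaum, to a maximal consistent set in $|\chi|\setminus|\psi|$, a contradiction. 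From $\vdash\chi\to\psi$ the rule $(RM_n)$ gives $\vdash\D_n\chi\to\D_n\psi$, and since $\Gamma$ is a maximal consistent set containing $\D_n\chi$ it contains $\D_n\psi$. This closes the modal case, hence the Truth Lemma, and strong completeness follows by satisfying any consistent $\Gamma$ at a maximal consistent set extending it. It is precisely $(RM_n)$ that guarantees the converse survives supplementation, which is why this rule is exactly the one matched by the monotonic frame class.
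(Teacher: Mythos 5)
Your proof is correct, but note that the paper itself contains no proof of this statement: it is imported by citation as Theorem 2.41 of Pacuit's book, so there is no in-paper argument to compare against. Your canonical-model construction --- taking the supplemented canonical neighbourhood functions $\nu^c_n$, proving the truth lemma with the auxiliary fact that $|\chi|\subseteq|\psi|$ forces $\vdash \chi\to\psi$, and then closing the modal case via $(RM_n)$ --- is exactly the standard proof given in that cited source, carried out uniformly over the indices $n$, which is unproblematic since the modalities $\D_n$ do not interact.
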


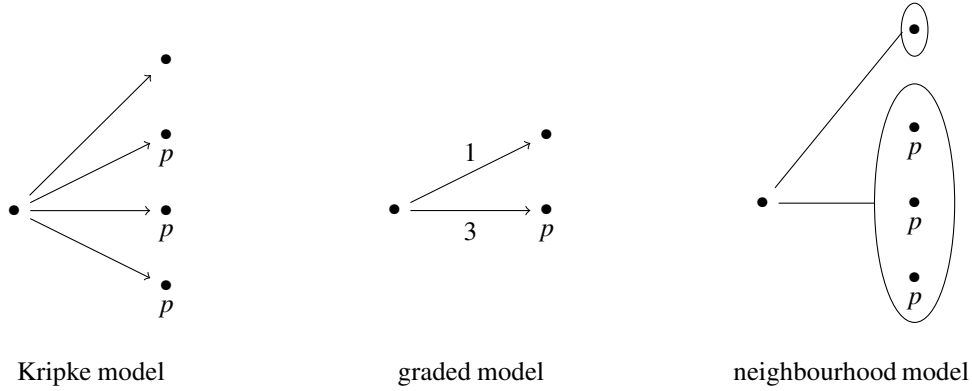
\begin{figure}
\centering
\scalebox{1}{
\begin{tikzpicture}		
%\draw[help lines] (-4,-4) grid (4,4);
\node at (-5,0) {
\begin{tikzpicture}
\node at(0,0) (1) {$\bullet$};
\node at(2,2) (5){$\bullet$};  
\node at(2,1) (2) {$\bullet$};
\node at(2,0) (3){$\bullet$};
\node at(2,-1) (4){$\bullet$};             
\draw[->] (1)-- (2);
\draw [->](1)-- (3);
\draw [->](1)-- (4);
\draw [->](1)-- (5);
\node at(2,0.7) (2) {$p$};
\node at(2,-0.3) (2) {$p$};
\node at(2,-1.3) (2) {$p$};

\end{tikzpicture}
};
\node at (0,0) {
\begin{tikzpicture}
\node at(0,0) (1) {$\bullet$};
\node at(2,1) (2) {$\bullet$};
\node at(2,0) (3){$\bullet$};         
\draw[->] (1)-- node[above=0.3mm]{1}(2);
\draw [->](1)--  node[below=0.3mm]{3}(3);
\node at(2,-0.3)  {$p$};
\end{tikzpicture}
};
\node at (5,0.3) {
\begin{tikzpicture}
\node at(0,0) (1) {$\bullet$};
\node at(2,2.3) (5){$\bullet$};  
\node at(2,1) (2) {$\bullet$};
\node at(2,0) (3){$\bullet$};
\node at(2,-1) (4){$\bullet$};             

\draw (1)-- (1.47,0);
\draw (1)-- (1.84, 2.27);

\node at(2,0.7) {$p$};
\node at(2,-0.3)  {$p$};
\node at(2, -1.3)  {$p$};
\draw (2,0)  ellipse (15pt and 45pt);
\draw (2,2.3)  ellipse (5pt and 10pt);

\end{tikzpicture}
};
\node at(-5,-2.5) (2) {Kripke model};
\node at(0,-2.5) (2) {graded model};
\node at(5,-2.5) (2) {neighbourhood model};
\end{tikzpicture}
}
\caption{Three different ways to make $\D_3 p$ true}
\label{fig:0113}
\end{figure}

\section{Graded modal logics are monotonic modal logics}
\label{sec:3}
In this section we show that graded modal logics are monotonic modal logics. 
Let $\mbf{G}$ be a graded modal logic. 

\begin{prop} \label{prop.gradmon}
Graded modal logics are monotonic modal logics.
\end{prop}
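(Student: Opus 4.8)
The plan is to reduce the statement to the single syntactic containment $\mbf{M_\mbb{N}} \subseteq \mbf{GrK}$. Recall that a graded modal logic is by definition any set of formulas $\mbf{G}$ with $\mbf{GrK} \subseteq \mbf{G}$, while a monotonic modal logic is any set with $\mbf{M_\mbb{N}} \subseteq \mbf{G}$, both formulated over the common language with the modalities $\D_n$ ($n \in \mbb{N}$). Hence, once we know $\mbf{M_\mbb{N}} \subseteq \mbf{GrK}$, every graded modal logic $\mbf{G}$ satisfies $\mbf{M_\mbb{N}} \subseteq \mbf{GrK} \subseteq \mbf{G}$, and is therefore a monotonic modal logic. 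So the whole proposition collapses to showing that $\mbf{GrK}$ proves everything $\mbf{M_\mbb{N}}$ proves.

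To establish $\mbf{M_\mbb{N}} \subseteq \mbf{GrK}$ I would argue by induction on the length of a derivation in $\mbf{M_\mbb{N}}$, showing that each $\mbf{M_\mbb{N}}$-theorem is a $\mbf{GrK}$-theorem. The axioms of $\mbf{M_\mbb{N}}$ are exactly the propositional tautologies, which are axioms of $\mbf{GrK}$ via $(Ax1)$; and $\mbf{GrK}$ is closed under $(MP)$. The only substantive step is therefore to verify that the monotonicity rule $(RM_n)$ is a \emph{derived rule} of $\mbf{GrK}$: from a $\mbf{GrK}$-provable premise $\varphi \to \psi$ we must obtain a $\mbf{GrK}$-proof of $\D_n \varphi \to \D_n \psi$.

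The key observation is that $(RM_n)$ is exactly what the combination of normality $(Gen)$ and $(Ax5)$ delivers. Concretely, suppose $\vdash_{\mbf{GrK}} \varphi \to \psi$. Applying $(Gen)$ yields $\vdash_{\mbf{GrK}} \B(\varphi \to \psi)$; the relevant instance $\B(\varphi \to \psi) \to (\D_n \varphi \to \D_n \psi)$ of $(Ax5)$ together with $(MP)$ then gives $\vdash_{\mbf{GrK}} \D_n \varphi \to \D_n \psi$, which is precisely the conclusion of $(RM_n)$. This closes the induction and hence proves $\mbf{M_\mbb{N}} \subseteq \mbf{GrK}$.

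I do not anticipate a genuine obstacle: the content of the proposition is essentially bookkeeping, and the only point requiring care is to recall that $(RM_n)$ is a rule applied to \emph{theorems}, so it suffices to derive its conclusion from a $\mbf{GrK}$-provable premise rather than from an arbitrary implication — which is exactly what $(Gen)$ requires of its input. In this sense the monotonicity of the graded diamonds is already built into $\mbf{GrK}$ through $(Ax5)$ and normality.
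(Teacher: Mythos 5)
Your proof is correct, and its mathematical core --- obtaining the conclusion of $(RM_n)$ from a provable premise by applying $(Gen)$, the corresponding instance of $(Ax5)$, and $(MP)$ --- is exactly the derivation the paper uses. The packaging differs, though, and the difference interacts with a looseness in the paper's definitions. You reduce everything to the single containment $\mbf{M_\mbb{N}} \subseteq \mbf{GrK}$ and then invoke transitivity of $\subseteq$; this is valid under the paper's literal definitions, which declare a graded (resp.\ monotonic) modal logic to be any set of formulas containing $\mbf{GrK}$ (resp.\ $\mbf{M_\mbb{N}}$). The paper instead proves, for an arbitrary graded modal logic $\mbf{G}$, that $\mbf{G}$ is closed under $(MP)$ and under each $(RM_n)$ --- which is what is needed if, as in the standard literature and as the paper's own proof implicitly presupposes, a monotonic modal logic must be \emph{closed} under these rules rather than merely contain $\mbf{M_\mbb{N}}$. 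Under that stronger reading your reduction leaves a gap: $\mbf{M_\mbb{N}} \subseteq \mbf{G}$ does not by itself make an arbitrary $\mbf{G} \supseteq \mbf{GrK}$ closed under $(RM_n)$; one must run your three-line derivation inside $\mbf{G}$ itself, using that $\mbf{G}$ contains all $(Ax5)$ instances and is closed under $(Gen)$ and $(MP)$ --- which is verbatim what the paper does. Since your derivation transfers unchanged, this is a framing issue, not a mathematical error. One further small divergence: you apply $(Ax5)$ uniformly for all $n$, including $n=0$; this is justified by the letter of $(Ax5)$ (which, unlike $(Ax3)$, carries no restriction $n>0$), but the paper is more cautious and settles $n=0$ separately from $(Ax2)$, using $\D_0\varphi \leftrightarrow \top$, so that nothing depends on reading $(Ax5)$ at grade $0$. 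If $(Ax5)$ were intended only for $n \geq 1$, your proof would need exactly that patch.
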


\begin{proof}
Let $\mbf{G}$ be a graded modal logic. To show that $\mbf{G}$ is a monotonic modal logic, it suffices to show that (i) $\mbf{G}$ is closed under $(MP)$ and (ii) for all $n \in \mbb{N}$, $\mbf{G}$ is closed under $(RM_n)$. Item (i) is immediate. We now show item (ii). We distinguish the case $n=0$ from the case $n>0$.

Let $n = 0$. Assume that $\mbf{G} \vdash \varphi \to \psi$. By $(Ax2)$, we have $\D_0 \varphi \leftrightarrow \top$ and $\D_0 \psi \leftrightarrow \top$ and hence $\D_0 \varphi \to \top$ and $\top \to \D_0 \psi$. It follows that $\mbf{G} \vdash  \D_0 \varphi \to  \D_0 \psi$.

Let now $n>0$. Assume that $\mbf{G}\vdash \varphi \to \psi$. By $(Gen)$, $\mbf{G} \vdash \B (\varphi \to \psi)$. Then by $(Ax5)$, $\mbf{G} \vdash \B (\varphi \to \psi)\to ( \D_n\varphi \to \D_n\psi)$. Finally, by $(MP)$ we get $\mbf{G} \vdash  \D_n\varphi \to \D_n\psi$.
\end{proof}

\begin{cor}
$\mbf{GrK}$ is a monotonic modal logic.
\end{cor}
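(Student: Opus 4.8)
The plan is to recognise that this Corollary is nothing more than an instance of Proposition~\ref{prop.gradmon}. First I would unwind the relevant definition given just above Theorem~\ref{thm:2106}: a \emph{graded modal logic} is any set $\Lambda$ of $\mathcal{L}_g$-formulas with $\mbf{GrK} \subseteq \Lambda$. The single observation that makes everything work is that $\mbf{GrK}$ trivially satisfies this requirement, since $\mbf{GrK} \subseteq \mbf{GrK}$. Hence $\mbf{GrK}$ is itself a graded modal logic.

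Having established that, I would simply instantiate Proposition~\ref{prop.gradmon} at $\mbf{G} = \mbf{GrK}$. That proposition asserts that every graded modal logic is a monotonic modal logic, so applying it to the graded modal logic $\mbf{GrK}$ yields immediately that $\mbf{GrK}$ is a monotonic modal logic, which is the claim.

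There is essentially no obstacle to overcome: all the substantive work has already been done in the proof of Proposition~\ref{prop.gradmon}, which verified closure under $(MP)$ and under $(RM_n)$ for every $n \in \mbb{N}$, splitting into the cases $n=0$ (via $(Ax2)$) and $n>0$ (via $(Gen)$, $(Ax5)$ and $(MP)$). The only point worth stating explicitly is the trivial set-theoretic inclusion $\mbf{GrK} \subseteq \mbf{GrK}$ that certifies $\mbf{GrK}$ as a graded modal logic, after which the Corollary follows by a one-line appeal to the Proposition.
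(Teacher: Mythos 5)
Your proposal is correct and coincides with the paper's own (implicit) justification: the corollary is stated without proof precisely because it follows by instantiating Proposition~\ref{prop.gradmon} at $\mbf{G} = \mbf{GrK}$, which qualifies as a graded modal logic via the trivial inclusion $\mbf{GrK} \subseteq \mbf{GrK}$. Nothing further is needed.
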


We now define axiomatization $\mbf{GrK}_{Mon}$ as the extension of $\mbf{M}_\mathbb{N}$ with $(Ax2)-(Ax6)$ of $\mbf{GrK}$ and the novel axiom $(Ax7)~ \D(\varphi \vee\psi) \leftrightarrow \D\varphi \vee \D\psi$. We show that $\mbf{GrK}$ and $\mbf{GrK}_{Mon}$ derive the same theorems.

\begin{prop}
For any formula $\varphi$, $\mbf{GrK} \vdash \varphi$ \,iff\, $\mbf{GrK}_{Mon}\vdash \varphi$.
\end{prop}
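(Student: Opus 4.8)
The plan is to prove the equivalence by showing that each axiomatization derives the distinctive axioms/rules of the other, so that they have the same set of theorems. Concretely, I would establish two inclusions: $\mbf{GrK} \vdash \varphi \Rightarrow \mbf{GrK}_{Mon} \vdash \varphi$ and conversely. Since both systems share $(Ax1)$--$(Ax6)$ (these are axioms of $\mbf{GrK}$ and, by construction, $\mbf{GrK}_{Mon}$ is $\mbf{M}_\mbb{N}$ plus exactly $(Ax2)$--$(Ax6)$ plus the new $(Ax7)$), the real content is matching up the remaining primitive ingredients: $\mbf{GrK}$ has $(Gen)$ and $(MP)$, whereas $\mbf{GrK}_{Mon}$ has $(MP)$, the rules $(RM_n)$ inherited from $\mbf{M}_\mbb{N}$, and the extra axiom $(Ax7)$.

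For the direction $\mbf{GrK} \vdash \varphi \Rightarrow \mbf{GrK}_{Mon} \vdash \varphi$, I would argue by induction on the length of the $\mbf{GrK}$-derivation. The only nontrivial step is $(Gen)$: I must show that $\mbf{GrK}_{Mon}$ is closed under $(Gen)$, i.e.\ from $\mbf{GrK}_{Mon} \vdash \varphi$ infer $\mbf{GrK}_{Mon} \vdash \B \varphi$. Recall $\B\varphi = \neg\D_1\neg\varphi = \neg\D\neg\varphi$. If $\vdash \varphi$ then propositionally $\vdash \neg\varphi \to \bot$, so by $(RM_1)$ we get $\vdash \D\neg\varphi \to \D_1\bot$; by $(Ax3)$ with $n=1$ we have $\vdash \D_1\bot \leftrightarrow \bot$, hence $\vdash \D\neg\varphi \to \bot$, which is $\vdash \neg\D\neg\varphi$, i.e.\ $\vdash \B\varphi$. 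Thus every $\mbf{GrK}$-proof transports into $\mbf{GrK}_{Mon}$. (Note $(Ax7)$ is not even needed for this direction.)

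For the converse, $\mbf{GrK}_{Mon} \vdash \varphi \Rightarrow \mbf{GrK} \vdash \varphi$, I again induct on derivation length, and the nontrivial obligations are to show $\mbf{GrK}$ is closed under each rule $(RM_n)$ and derives $(Ax7)$. Closure under $(RM_n)$ is exactly the content of Proposition~\ref{prop.gradmon} (the proof that graded modal logics are monotonic), so I may invoke it directly: $\mbf{GrK}$ is a graded modal logic and hence closed under all $(RM_n)$. For $(Ax7)$, the left-to-right implication $\D(\varphi\vee\psi) \to \D\varphi \vee \D\psi$ is the substantive half; the right-to-left implication follows from two applications of $(RM_1)$ to $\varphi \to \varphi\vee\psi$ and $\psi \to \varphi\vee\psi$ together with propositional reasoning. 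For the left-to-right half I would derive it from $(Ax6)$ together with the counting axioms: reasoning by cases on whether $\D(\varphi\wedge\psi)$ holds and using $(Ax6)$ to additively combine the exact-count formulas $\D_{!m}$, or more simply observing that having at least one $(\varphi\vee\psi)$-successor forces at least one $\varphi$-successor or one $\psi$-successor.

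I expect the main obstacle to be the left-to-right direction of $(Ax7)$, namely deriving $\D(\varphi\vee\psi) \to \D\varphi\vee\D\psi$ purely syntactically inside $\mbf{GrK}$. Semantically this is transparent (a single $(\varphi\vee\psi)$-successor satisfies $\varphi$ or $\psi$), but a formal proof must route through $(Ax6)$ and the definition $\D_{!n}\varphi := \D_n\varphi \wedge \neg\D_{n+1}\varphi$, handling the bookkeeping of grades carefully; alternatively, since Theorem~\ref{thm:2106} gives soundness and completeness of $\mbf{GrK}$ with respect to all Kripke frames, I could instead verify $(Ax7)$ semantically and appeal to completeness to conclude $\mbf{GrK}\vdash(Ax7)$, which sidesteps the delicate grade-counting derivation entirely. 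I would flag this semantic shortcut as the cleanest route if a direct syntactic derivation proves cumbersome.
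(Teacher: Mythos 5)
Your proposal is correct, and its skeleton coincides with the paper's proof: both directions are handled by simulating the distinctive primitives of each system inside the other. Your derivation of $(Gen)$ in $\mbf{GrK}_{Mon}$ (via $\neg\varphi\to\bot$, $(RM_1)$, $(Ax3)$ and contraposition) is literally the paper's derivation, and the paper likewise discharges the $(RM_n)$ obligation by citing Proposition \ref{prop.gradmon} and obtains $\D\varphi\vee\D\psi\to\D(\varphi\vee\psi)$ from monotonicity reasoning. The one place where you diverge is the half you correctly flag as the crux, $\D(\varphi\vee\psi)\to\D\varphi\vee\D\psi$. The paper proves it syntactically with a single concrete instantiation that your sketch leaves unspecified: take $(Ax6)$ with $m=n=0$, use $(Ax2)$ to collapse $\D_{!0}\chi=\D_0\chi\wedge\neg\D\chi$ to $\neg\D\chi$, obtaining $\neg\D(\varphi\wedge\psi)\wedge\neg\D\varphi\wedge\neg\D\psi\to\neg\D(\varphi\vee\psi)$; contraposition and De Morgan give $\D(\varphi\vee\psi)\to\D(\varphi\wedge\psi)\vee\D\varphi\vee\D\psi$, and $\D(\varphi\wedge\psi)\to\D\varphi$ (from $(RM_1)$ applied to $\varphi\wedge\psi\to\varphi$) finishes the argument --- so no bookkeeping over general grades $m,n$ is needed, only the degenerate instance. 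Your fallback of verifying $(Ax7)$ semantically and invoking completeness (Theorem \ref{thm:2106}) is sound and non-circular, since that theorem is imported from the literature and nothing in its proof depends on this proposition; what it costs is self-containedness: an equivalence between two proof systems ends up being established model-theoretically, whereas the paper's route keeps it purely syntactic and elementary. (A minor remark: the paper's direction labels $(\Rightarrow)$ and $(\Leftarrow)$ are swapped relative to yours, but the mathematical content of each direction matches.)
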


\begin{proof}
$(\Leftarrow)$\, $(Gen)$ is derivable in $\mbf{GrK}_{Mon}$ as follows:
\[
\begin{array}{lll}
%{\fns 1} & \varphi & \text{\fns by assumption} \\
%\varphi \to (\top \to \varphi) & \\
{\fns 1} & \varphi & ~~\text{\fns assumption}\\
{\fns 2} & \varphi \to (\neg \varphi \to \bot) & ~~\text{\fns  Duns Scotus law}\\
{\fns 3} & \neg \varphi \to \bot & ~~\text{\fns 1,2 $(MP)$} \\
{\fns 4} & \D\neg \varphi \to \D \bot & ~~\text{\fns 3 by $(RM_1)$} \\
%\D \bot \to \bot & \\
{\fns 5} & \D \neg \varphi \to \bot & ~~\text{\fns 4 by $(Ax3)$} \\
{\fns 6} & \top \to \neg \D\neg \varphi & ~~\text{\fns 5 by contraposition} \\
{\fns 7} & \Box\varphi & ~~\text{\fns 6 by def.~of $\Box$ and $(Ax1)$} \\
\end{array}
\]
$(\Rightarrow)$\, It suffices to show that $(Ax7)$ is derivable and $(RM_n)$ is admissible rule in $\mbf{GrK}$. The latter follows from Proposition \ref{prop.gradmon}. $(Ax7)$ is equivalent to (i) $\D\varphi \lor \D \psi \to \D (\varphi \lor\psi)$ and (ii) $\D (\varphi \lor\psi) \to \D\varphi \lor \D \psi$. (i) and (ii) are derivable as follows:

\[
\begin{array}{lll}
{\fns 1} & \Box (\varphi \to \varphi \lor \psi) & ~~\text{\fns by $(Ax1)$ and (Gen)} \\
{\fns 2} & \D\varphi \to\D(\varphi \lor \psi) & ~~\text{\fns 1 and (Ax5) by (MP)} \\
{\fns 3} & \Box (\psi \to \varphi \lor \psi) & ~~\text{\fns by $(Ax1)$ and (Gen)} \\
{\fns 4} & \D\psi \to\D(\varphi \lor \psi) & ~~\text{\fns 3 and (Ax5) by (MP)} \\
%\psi\to \varphi \lor \psi & \\
%{\fns 3} & \D\psi \to \D (\varphi \lor\psi) & \\
{\fns 5} & \D\varphi \lor \D \psi \to \D (\varphi \lor \psi)  & ~~\text{\fns 2 and 4 by (Ax1)} \\
\end{array}
\]
\[
\begin{array}{lll}
{\fns 1} & \neg \D (\varphi \land \psi) \land \D_{0} \varphi \land \neg \D \varphi \land \D_{0}\psi \land \neg \D \psi \\ & \ \hspace{2cm} \to \D_{0}(\varphi \lor \psi) \land \neg \D (\varphi \lor \psi)& ~~\text{\fns (Ax6) with $m=n =0$} \\
{\fns 2} & \neg \D (\varphi \land \psi) \land \neg \D \varphi \land \neg \D \psi \to \neg \D (\varphi \lor \psi) & ~~\text{\fns 1 by $(Ax2)$ and $\top \land \varphi \leftrightarrow \varphi$} \\

{\fns 3} & \D (\varphi \lor \psi) \to \D (\varphi \land \psi) \lor \D \varphi \lor \D \psi & ~~\text{\fns 2 by contraposition, De Morgan and double negation} \\

{\fns 4} & \varphi \land \psi \to \varphi  & ~~\text{\fns classical tautology} \\

{\fns 5} & \D(\varphi \land \psi) \to \D \varphi  & ~~\text{\fns 4, $RM_1$} \\

{\fns 6} & \D(\varphi \land \psi) \to \D \varphi \lor \D \psi  & ~~\text{\fns 5, property of $\lor$} \\

{\fns 7} & \D \varphi \to \D \varphi \lor \D \psi  & ~~\text{classical tautology} \\

{\fns 8} & \D \psi \to \D \varphi \lor \D \psi  & ~~\text{classical tautology} \\

{\fns 9} &  \D(\varphi \land \psi) \lor \D \varphi \lor \D \psi \to \D \varphi \lor \D \psi & ~~\text{\fns 6, 7, 8, property of $\lor$ } \\

{\fns 10} & \D (\varphi \lor\psi) \to \D \varphi \lor \D \psi & ~~\text{\fns 3, 9, hypothetical syllogism} \\
\end{array}
\]

\end{proof}
Another interesting question is whether there exists a class of neighbourhood frames with respect to which $\mbf{GrK}$ is sound and complete. In monotonic neighbourhood frames the class of so-called \emph{KW-formulas} (\cite[Definition 5.13]{hansen2003monotonic}) is elementary (\cite[Theorem 5.14]{hansen2003monotonic} and canonical (\cite[Theorem 10.34]{hansen2003monotonic}). Therefore, a presentation where each axiom is a KW-formula would make it straightforward to prove soundness and strong completeness. 
%If (Ax1)-(Ax7) were KW-formulas, then the class of neighbourhood frames defined by the first-order correspondents of (Ax1)-(Ax7) were sound and strongly complete with respect to $\mbf{GrK}$. 
Unfortunately, (Ax5) and (Ax6) are not KW-formulas, since they have $\neg $ inside the scope of $\D$, which is forbidden in KW-formulas. Therefore we can not prove completeness of $\mbf{GrK}$ indirectly via a reference to KW-formulas. 

If we adopt a more direct method to prove the completeness, we need to show that the properties defined by (Ax2)-(Ax7) holds in the canonical frame of monotonic modal logic containing them. Axioms (Ax5) and (Ax6) resp.\ correspond to the  properties:
\[\begin{array}{l}
\hspace{-.2cm}\forall w  \forall X \forall Y(X\cap (W \! \setminus \! Y) \not\in \nu_1(w)\ \& \ X \in \nu_n(w) \Rightarrow Y \in \nu_n(w)) \\
\hspace{-.2cm}\forall w  \forall X \forall Y (X \!\cap\! Y \not \in \nu_1(w) \ \& \ X \in \nu_m(w) \ \& \ X \not \in \nu_{m+1}(w) \ \& \  Y \in \nu_n(w) \ \& \ Y \not \in \nu_{n+1}(w) \Rightarrow \\ \ \hspace{7cm}
 X\! \cup \!Y \in \nu_{m+n}(w) \ \& \ X \cup Y \not \in \nu_{(m+n+1)}(w))
\end{array}\]
The difficulty lies at showing that (Ax5) and (Ax6) are valid in the canonical frame of monotonic modal logic containing (Ax5) and (Ax6). For canonical frames of monotonic modal logics, we refer to \cite[Def. 9.3]{chellas1980modal},  \cite[Def. 6.2]{hansen2003monotonic} and \cite[Def. 2.37]{pacuit2017neighborhood}.

In the next section, we identify a class of complete neighbourhood frames via an operation $(.)^\bullet$, which is shown to be first-order definable in Section \ref{sec:FOdef} and modally undefinable in Section \ref{sec:modalundef}.

\section{Graded neighbourhood frames}\label{sec:gnf}

Given a set $X$, denote by $\mathcal{P}_{\ge n} (X)$ the set of subsets of $X$ such that the cardinality of each subset is at least $n$, in other words,
 $\mathcal{P}_{\ge n} (X)=\{ X' \subseteq X \mid \abs{X'} \ge n \}$. For $\Gamma \subseteq \mathcal{P}(W)$, define $\uparrow \! \! \! \Gamma$ to be the up-set generated by $\Gamma$, that is, $\uparrow  \!  \!\!  \Gamma:= \{Y\in \mathcal{P}(W) \mid  \exists X(X \in \Gamma \ \& \ X\subseteq Y)\}$.
 \begin{defn}\label{def:1744}
A neighbourhood frame $\mathbb{F}=(W,\{\nu_n\}_{n \in \mathbb{N}})$ is a \emph{graded neighbourhood frame} if for all $w \in W$, there exists an $A \subseteq W$ such that for all $ n\in \mathbb{N}$, 
 ${\nu_n(w)} = {\uparrow \! \!  \mathcal{P}_{\ge n}(A)}$.\end{defn}

\begin{defn}\label{defn:bullet}
For a Kripke frame $\mathcal{F}=(W,R)$, the associated  graded neighbourhood frame of $\mathcal{F}$ is $\mathcal{F}^\bullet=(W, \{\nu_n\}_{n\in \mathbb{N}})$, where for $w \in W$ and $n \in \mathbb{N}$, 
${\nu_n(w)} = {\uparrow \! \! \mathcal{P}_{\ge n} (R[w])}$.
\end{defn}
That each $\nu_n$ in $\mathcal{F}^\bullet=(W, \{\nu_n\}_{n\in \mathbb{N}})$ is monotonic follows directly from the definition. Then we have the following result:

\begin{prop}\label{prop:0004}
Let $\mathcal{F}=(W,R)$ be a Kripke frame and $V$ a valuation on $\mathcal{F}$. Then for all $w\in W$ and all formulas $\varphi$
\[
(\mathcal{F},V),w \Vdash \varphi \quad \text{~iff~}\quad (\mathcal{F}^\bullet,V) ,w \Vdash \varphi. 
\]
\end{prop}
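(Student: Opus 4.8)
The plan is to prove the statement by induction on the complexity of $\varphi$, since the two models $(\mathcal{F},V)$ and $(\mathcal{F}^\bullet,V)$ share the same underlying set $W$ and the same valuation $V$. The base case ($\varphi = p$ a proposition letter) and the Boolean cases ($\neg$, $\lor$) are immediate, because the truth conditions for propositional connectives are identical in Kripke and neighbourhood semantics and depend only on $V$ and on the inductive hypothesis. So the entire content of the argument lives in the modal case $\varphi = \D_n \psi$, and the Boolean cases should be dispatched in a single sentence each.

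For the modal case I would fix $w \in W$ and compute both sides. On the Kripke side, $(\mathcal{F},V),w \Vdash \D_n \psi$ holds iff $\abs{R[w] \cap \llbracket \psi \rrbracket}\ge n$, where by the induction hypothesis $\llbracket \psi \rrbracket$ denotes the common truth set of $\psi$ in the two models. On the neighbourhood side, since each $\nu_n$ is monotonic (noted just before the proposition), $(\mathcal{F}^\bullet,V),w \Vdash \D_n \psi$ holds iff there is some $X$ with $X \in \nu_n(w)$ and $X \subseteq \llbracket \psi \rrbracket$, and by Definition~\ref{defn:bullet} this means $X \in {\uparrow}\,\mathcal{P}_{\ge n}(R[w])$ with $X \subseteq \llbracket \psi \rrbracket$.

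The crux is then a purely set-theoretic equivalence: there exists $X$ with $X \in {\uparrow}\,\mathcal{P}_{\ge n}(R[w])$ and $X \subseteq \llbracket \psi \rrbracket$ if and only if $\abs{R[w] \cap \llbracket \psi \rrbracket} \ge n$. For the forward direction, $X \in {\uparrow}\,\mathcal{P}_{\ge n}(R[w])$ gives some $Z \subseteq X$ with $Z \subseteq R[w]$ and $\abs{Z}\ge n$; combined with $X \subseteq \llbracket \psi \rrbracket$ this yields $Z \subseteq R[w] \cap \llbracket \psi \rrbracket$, hence $\abs{R[w]\cap \llbracket \psi \rrbracket}\ge n$. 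For the converse, if $\abs{R[w] \cap \llbracket \psi \rrbracket}\ge n$ I would simply take $X = R[w] \cap \llbracket \psi \rrbracket$ (or any $n$-element subset of it), which is a subset of $R[w]$ of cardinality at least $n$, hence lies in $\mathcal{P}_{\ge n}(R[w]) \subseteq {\uparrow}\,\mathcal{P}_{\ge n}(R[w])$, and satisfies $X \subseteq \llbracket \psi \rrbracket$ by construction.

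The only point requiring a little care — and the closest thing to an obstacle — is the interaction with the ordinal $\omega$: when $R[w]\cap\llbracket\psi\rrbracket$ is infinite, one cannot literally take it as the witness of cardinality ``exactly $n$'', but since the threshold condition is $\ge n$ for a fixed natural number $n$ and $n$-element subsets always exist once the intersection has at least $n$ elements, choosing a finite $n$-element subset sidesteps any subtlety. I would therefore phrase the converse witness as ``pick any $Z \subseteq R[w]\cap\llbracket\psi\rrbracket$ with $\abs{Z}=n$,'' which exists precisely because the intersection has at least $n$ elements, and note $Z \in \mathcal{P}_{\ge n}(R[w])$ and $Z \subseteq \llbracket\psi\rrbracket$.
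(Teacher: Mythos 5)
Your proposal is correct and follows essentially the same route as the paper's proof: induction on $\varphi$ with trivial Boolean cases, and the modal case reduced to the same set-theoretic equivalence with the same witnesses (the intersection $R[w]\cap\llbracket\psi\rrbracket$ in one direction, and a set $Y\in\mathcal{P}_{\ge n}(R[w])$ below the neighbourhood $X$ in the other). Your closing worry about infinite $R[w]\cap\llbracket\psi\rrbracket$ is unnecessary, since $\mathcal{P}_{\ge n}(R[w])$ only requires cardinality \emph{at least} $n$ (the paper takes the full intersection as witness regardless of its size), though your finite $n$-element subset works equally well.
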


\begin{proof}
The proof is by induction on $\varphi$. The propositional cases follows from the definition and induction hypothesis.

As for the modal case, let $\varphi$ be $\D_n \psi, n \in \mbb{N}$, we have
\[
\begin{array}{lcll}
(\mathcal{F},V),w \Vdash \D_n \psi&
\text{~iff~} & \abs{R[w]\cap \den{\psi}_{(\mathcal{F},V)}}\ge n\\
&\text{~iff~} & \abs{R[w]\cap \den {\psi}_{(\mathcal{F}^\bullet,V)}}\ge n &\text{(IH)}\\
&\text{~iff~} & \exists X \subseteq W \ (X \in \nu_n(w)  \ \& \ X \subseteq  \den {\psi}_{(\mathcal{F}^\bullet,V)})&(*)\\
&\text{~iff~} & (\mathcal{F}^\bullet,V) , w \Vdash \D_n \psi
\end{array}
\]
Here is the proof for the equivalence marked by $(*)$. First assume that  $\abs{R[w]\cap \den {\psi}_{(\mathcal{F}^\bullet,V)}}$ $ \ge n$. Then $R[w]\cap\den {\psi}_{(\mathcal{F}^\bullet,V)}\in \mathcal{P}_{\ge n}(R[w])$. By definition, $\nu_n(w) = \uparrow \! \! \mathcal{P}_{\ge n}(R[w])$. Hence, $R[w]\cap \den {\psi}_{(\mathcal{F}^\bullet,V)}\in \nu_n(w)$. We also have $R[w]\cap \den {\psi}_{(\mathcal{F}^\bullet,V)}\subseteq \den {\psi}_{(\mathcal{F}^\bullet,V)}$, which completes the proof of this direction. Now assume that $X \in \nu_n(w)$ and $X \subseteq \den {\psi}_{(\mathcal{F}^\bullet,V)}$. Since $\nu_n(w)=\uparrow \! \! \mathcal{P}_{\ge n} (R[w])$, $X \in \uparrow \! \! \mathcal{P}_{\ge n}(R[w])$. Then there exists $Y \in \mathcal{P}_{\ge n}(R[w])$ and $Y\subseteq X$. It follows that $Y \subseteq R[w]$ and $\abs{Y} \ge n$. Since $X \subseteq\den {\psi}_{(\mathcal{F}^\bullet,V)}$, $Y \subseteq\den {\psi}_{(\mathcal{F}^\bullet,V)}$. Hence, $Y = Y\cap \den {\psi}_{(\mathcal{F}^\bullet,V)} \subseteq R[w] \cap \den {\psi}_{(\mathcal{F}^\bullet,V)}$ and therefore $\abs{R[w] \cap \den {\psi}_{(\mathcal{F}^\bullet,V)}}\ge \abs{Y} \geq n$.
\end{proof}

Given a graded neighbourhood frame $\mathbb{F}= (W,\{\nu_n\}_{n \in \mathbb{N}})$ with  $\nu_n(w)  = \uparrow \!\! \mathcal{P}_{\ge n}(A_w)$, we can associate it with a Kripke frame $\mathbb{F}_\bullet =(W,R)$ with $R[w]=A_w$. It follows from definitions that $(\mathbb{F}_\bullet)^\bullet = \mathbb{F}$ and $(\mathcal{F}^\bullet)_\bullet =\mathcal{F}$.

For a class of Kripke frames $S_K$, let $S_K^\bullet=\{\mathcal{F}^\bullet \mid \mathcal{F} \in S_K\}$. Recall that $\msf{F}_K$ is the class of all Kripke frames. Since $(\mathbb{F}_\bullet)^\bullet = \mathbb{F}$ for any graded neighbourhood frame $\mathbb{F}$, $\msf{F}_K^\bullet$ is equivalent to the class of all graded neighbourhood frames. 

\begin{thm}\label{thm:0009}
$\mbf{GrK}$ is sound and strongly complete with respect to the class of graded neighbourhood frames.
\end{thm}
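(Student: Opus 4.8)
The plan is to transfer soundness and completeness from the Kripke semantics, where they are already available (Theorem~\ref{thm:2106}), along the truth-preserving correspondence $(\cdot)^\bullet$. The single fact doing all the work is a \emph{transfer lemma}: for every set $\Gamma\cup\{\varphi\}$ of $\mathcal{L}_g$-formulas,
\[
\Gamma\Vdash_{\msf{F}_K}\varphi\quad\text{iff}\quad\Gamma\Vdash_{\msf{F}_K^\bullet}\varphi,
\]
where $\Vdash$ denotes the local semantic consequence relation and $\msf{F}_K^\bullet$ is, as noted just before the statement, exactly the class of graded neighbourhood frames.

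I would first prove the transfer lemma. The maps $\mathcal{F}\mapsto\mathcal{F}^\bullet$ and $\mathbb{F}\mapsto\mathbb{F}_\bullet$ are mutually inverse bijections between $\msf{F}_K$ and $\msf{F}_K^\bullet$ (the identities $(\mathbb{F}_\bullet)^\bullet=\mathbb{F}$ and $(\mathcal{F}^\bullet)_\bullet=\mathcal{F}$ were already recorded, and they force injectivity in both directions), and they fix the carrier $W$, so a valuation $V$ is literally the same object on $\mathcal{F}$ and on $\mathcal{F}^\bullet$. Proposition~\ref{prop:0004} then says that the pointed models $(\mathcal{F},V),w$ and $(\mathcal{F}^\bullet,V),w$ satisfy exactly the same formulas. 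Consequently a Kripke pointed model refuting $\Gamma\Vdash\varphi$ (verifying all of $\Gamma$ but not $\varphi$ at some $w$) yields, via $(\cdot)^\bullet$, a graded neighbourhood refutation, and any graded neighbourhood counterexample $(\mathbb{F},V),w$ pushes down along $(\cdot)_\bullet$ to a Kripke counterexample. Since the two maps are surjective onto the respective frame classes, quantifying over all models on each side gives the stated equivalence of consequence relations.

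I would then chain the transfer lemma with the soundness and completeness of $\mbf{GrK}$ over Kripke frames to obtain, for all $\Gamma$ and $\varphi$ (with $\vdash_{\mbf{GrK}}$ the local derivability relation),
\[
\Gamma\vdash_{\mbf{GrK}}\varphi\quad\Longleftrightarrow\quad\Gamma\Vdash_{\msf{F}_K}\varphi\quad\Longleftrightarrow\quad\Gamma\Vdash_{\msf{F}_K^\bullet}\varphi .
\]
The left equivalence is Theorem~\ref{thm:2106} and the right one is the transfer lemma; reading the outer equivalence from left to right is soundness over graded neighbourhood frames, and from right to left is completeness. Equivalently, in satisfiability form: every $\mbf{GrK}$-consistent set is satisfiable in some Kripke model, hence (by Proposition~\ref{prop:0004}) in its $(\cdot)^\bullet$-image, which is a graded neighbourhood model.

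The hard part is the gap between the \emph{weak} completeness stated in Theorem~\ref{thm:2106} and the \emph{strong} completeness claimed here: the transfer lemma cannot manufacture strength, since for an arbitrary premise set $\Gamma$ a graded neighbourhood counterexample exists iff a Kripke one does, so I genuinely need the strong form of the Kripke completeness. I would secure it by noting that the canonical-model arguments in the cited sources establish strong completeness of $\mbf{GrK}$ with respect to Kripke frames (equivalently, that $\mbf{GrK}$ is compact over $\msf{F}_K$, upgrading Theorem~\ref{thm:2106} to the strong form). I deliberately avoid the direct route of building a canonical monotonic neighbourhood model and checking that it is a graded neighbourhood frame: as observed just before the theorem, verifying that $(Ax5)$ and $(Ax6)$ survive in that canonical frame is exactly the obstruction that the transfer argument is designed to bypass.
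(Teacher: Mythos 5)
Your proposal is correct and follows essentially the same route as the paper's own proof: both transfer soundness and completeness from Kripke frames along the truth-preserving correspondence of Proposition~\ref{prop:0004}, together with the fact that $\msf{F}_K^\bullet$ is exactly the class of graded neighbourhood frames. You are in fact more careful than the paper's two-line proof, which silently invokes Theorem~\ref{thm:2106} (stated only as ``sound and complete'') as giving \emph{strong} completeness; your explicit appeal to the strong form of Kripke completeness from the cited canonical-model constructions is precisely the patch that step needs.
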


\begin{proof}
By Theorem \ref{thm:2106}, $\mbf{GrK}$ is sound and strongly complete with respect to $\msf{F}_K$. By Proposition \ref{prop:0004}, $\mbf{GrK}$ is sound and strongly complete with respect to $\msf{F}_K^\bullet$. Then the claim follows from the fact that $\msf{F}_K^\bullet$ is equivalent to the class of all graded neighbourhood frames. 
\end{proof}

\section{Graded neighbourhood frames are first-order definable}\label{sec:FOdef}

 A class $S_N$ of neighbourhood frames is \emph{first-order definable} if there exists a set of first-order formulas $\Gamma$ such that $\mathbb{F} \models \Gamma$ iff $\mathbb{F} \in S_N$. In this section, we show that the class of graded neighbourhood frames is(two-sorted) first-order definable in the (two-sorted) first-order language $\mathcal{L}_g^1$ of $\mathcal{L}_g$ defined below.

Each monotonic neighbourhood frame $\mathbb{F}=(W, \{\nu_n\}_{n \in \mathbb{N}})$ can be seen as a two-sorted relational structure $(W,\mathcal{P}(W), \{R_{\nu_n}\}_{n \in \mathbb{N}}, R_\ni)$ where $R_{\nu_n} \subseteq W \times \mathcal{P}(W)$ and $R_\ni \subseteq \mathcal{P}(W) \times W$ such that $w R_{\nu_n} X$ iff $X\in \nu_n(w)$ and $XR_\ni w$ iff $w\in X$. Accordingly, the (two-sorted) first-order language $\mathcal{L}^1_g$ of $\mathcal{L}_g$ has equality $=$, first-order variables $w, u,v,\ldots$ over $W$,  first-order variables $X,Y,Z,\ldots$ over $\mathcal{P}(W)$, binary symbols $R_{\nu_n}$ for $n\in \mathbb{N}$ and $R_\ni$, and unary relation symbols $P,Q,\ldots$ corresponding to $p,q,\ldots\in \msf{Prop}$.

 In other words, given sets of variables $\Psi$ and $\Phi$, formulas in $\mathcal{L}_g^1$ are defined inductively as follows:
 \[
 \mathcal{L}_g^1 \ni \chi ::  = w = u  \mid X= Y \mid Pw \mid R_{\nu_n} wX \mid R_\ni Xw \mid \neg \chi \mid \chi \lor \chi \mid \forall x \chi  \mid \forall X \chi
 \]
 where $w, u \in \Psi$, $X, Y \in \Phi$, $P$ corresponds to $p \in \msf{Prop}$ and $n \in \mathbb{N}$.

A set $A$ is called \emph{atomic in $\nu_1(w)$} if for all $a \in A$, $\{a\} \in \nu_1(w)$.
Denote by $(\star)$ the following conditions: for all $w \in W$
\begin{itemize}
\item [$(\star1)$] $\nu_0(w) =\mathcal{P}(W)$.
\item [$(\star2)$] $\nu_n(w)$ is closed under supersets for $n \in \mathbb{N}$.
\item [$(\star3)$] $\emptyset \not \in \nu_n(w)$ for $n \in \mathbb{N}$.
%\item[$(\star3)$] If $X\cup Y \in \nu_1(w)$, then $X \in \nu_1(w)$ or $Y \in \nu_1(w)$.
\item [$(\star4)$] If $X\in \nu_n( w )$, then there exists a minimal $Y \in \nu_n( w )$ such that $Y\subseteq X$.
\item [$(\star5)$] If $Y$ is a minimal element in $\nu_n(w)$, then $\abs{Y}= n$ and  $Y$ is atomic in $\nu_1(w)$. 
\item [$(\star6)$] If $\{y_1\},\ldots, \{y_n \}\in \nu_1(w)$ and $y_1 ,\ldots,  y_n$ are pairwise distinct, then $\bigcup_{1 \le i \le n} \{y_i\}$ is a minimal element in $\nu_n(w)$.
\end{itemize}
Note that conditions $(\star)$ can be expressed in language $\mathcal{L}_g^1$. For example, $\abs{Y}\ge n$ iff $y_1\in Y\land\ldots\land y_n\in Y\land  \bigwedge_{i\neq j}y_i\neq y_j$, and $Y$ is atomic in $\nu_1(w)$ iff
$\forall Z( \forall Z'(Z' \!\subseteq \!Z \Rightarrow Z'\! =\!\emptyset \text{~or~} Z'\! = \!Z) \ \& \ Z \! \subseteq \!Y \Rightarrow Z \in \nu_1(w))$.

\begin{prop}
Let $\mathbb{F}=(W, \{\nu_n\}_{n\in \mathbb{N}})$ be a neighbourhood frame. Then $\mathbb{F}$ is graded \weg{a graded neighbourhood frame} iff $\mathbb{F}$ satisfies $(\star)$.
\end{prop}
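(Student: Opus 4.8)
The plan is to prove the two implications separately, the crux being an explicit description of the minimal elements of each $\nu_n(w)$. Since every clause of $(\star)$ and the defining condition of a graded frame are pointwise in $w$, I would fix $w \in W$ throughout and isolate the following preliminary observation as a small lemma: for $A \subseteq W$ and $n \ge 1$, the minimal elements of $\uparrow \mathcal{P}_{\ge n}(A)$ are exactly the $n$-element subsets of $A$. Indeed, any $Y \in \uparrow \mathcal{P}_{\ge n}(A)$ contains some $Y' \subseteq A$ with $\abs{Y'} \ge n$; shrinking $Y'$ to an $n$-element subset $Y'' \subseteq Y' \subseteq Y$ produces an element of $\mathcal{P}_{\ge n}(A) \subseteq \uparrow \mathcal{P}_{\ge n}(A)$ below $Y$, so a minimal $Y$ must equal $Y''$, forcing $\abs{Y} = n$ and $Y \subseteq A$; conversely each $n$-subset of $A$ lies in $\mathcal{P}_{\ge n}(A)$ while nothing strictly smaller can be in the up-set, since membership requires cardinality at least $n$.

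For $(\Rightarrow)$, assume $\mathbb{F}$ is graded and fix the witness $A = A_w$ with $\nu_n(w) = \uparrow \mathcal{P}_{\ge n}(A)$ for all $n$. Then $(\star1)$ holds because $\mathcal{P}_{\ge 0}(A) = \mathcal{P}(A) \ni \emptyset$, so its up-set is all of $\mathcal{P}(W)$; $(\star2)$ is immediate, each $\nu_n(w)$ being an up-set by construction; and $(\star3)$, read for $n \ge 1$, holds since $\emptyset \in \uparrow \mathcal{P}_{\ge n}(A)$ would require $\emptyset \in \mathcal{P}_{\ge n}(A)$. Conditions $(\star4)$–$(\star6)$ then read off the preliminary lemma: for $(\star4)$, any $X \in \nu_n(w)$ contains an $n$-subset of $A$, which is minimal; for $(\star5)$, a minimal $Y$ is an $n$-subset of $A$, so $\abs{Y} = n$ and each singleton $\{a\}$ with $a \in Y \subseteq A$ lies in $\uparrow \mathcal{P}_{\ge 1}(A) = \nu_1(w)$, i.e. $Y$ is atomic; for $(\star6)$, pairwise distinct $y_1, \dots, y_n$ with $\{y_i\} \in \nu_1(w)$ satisfy $y_i \in A$ (as $\{y_i\}$ is its own only nonempty subset), so their union is an $n$-subset of $A$, hence minimal.

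For $(\Leftarrow)$, assume $(\star)$ and take as candidate witness the set of atoms $A_w := \{a \in W : \{a\} \in \nu_1(w)\}$. I would verify $\nu_n(w) = \uparrow \mathcal{P}_{\ge n}(A_w)$ by cases. For $n = 0$ this is $(\star1)$ together with $\uparrow \mathcal{P}(A_w) = \mathcal{P}(W)$. For $n \ge 1$, the inclusion $\supseteq$ uses $(\star6)$ and $(\star2)$: if $X$ contains an $n$-element subset $\{y_1, \dots, y_n\} \subseteq A_w$, then each $\{y_i\} \in \nu_1(w)$, so $(\star6)$ places the union in $\nu_n(w)$ and $(\star2)$ lifts this to $X$. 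The inclusion $\subseteq$ uses $(\star4)$ and $(\star5)$: any $X \in \nu_n(w)$ admits a minimal $Y \in \nu_n(w)$ with $Y \subseteq X$, and by $(\star5)$, $\abs{Y} = n$ with $Y$ atomic, so $Y \subseteq A_w$, giving $Y \in \mathcal{P}_{\ge n}(A_w)$ and hence $X \in \uparrow \mathcal{P}_{\ge n}(A_w)$.

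The main obstacle, and really the only non-mechanical step, is identifying the correct witness $A_w$ and showing it works uniformly in $n$; the atom-based definition is precisely what makes $(\star5)$ and $(\star6)$ interlock into the two inclusions. One minor point I would flag explicitly is the interplay of $(\star1)$ and $(\star3)$: the latter must be read for $n \ge 1$, since it is inconsistent with $(\star1)$ at $n = 0$, and in fact it is redundant — $(\star4)$ and $(\star5)$ already force $\emptyset \notin \nu_n(w)$ for $n \ge 1$ — so I would note this rather than rely on $(\star3)$ in the argument.
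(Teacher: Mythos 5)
Your proof is correct and follows essentially the same route as the paper's: you use the same witness (your $A_w$ is exactly the paper's ``maximum atomic set in $\nu_1(w)$''), with $(\star4)$--$(\star5)$ yielding the inclusion $\nu_n(w) \subseteq\ \uparrow\!\mathcal{P}_{\ge n}(A_w)$ and $(\star6)$ plus $(\star2)$ the converse, merely streamlining the paper's case split on $\nu_1(w)=\emptyset$ and its three auxiliary lemmas into one uniform argument via your characterization of the minimal elements of $\uparrow\!\mathcal{P}_{\ge n}(A)$. Your side remark is also accurate: $(\star3)$ as stated contradicts $(\star1)$ at $n=0$, so it must be read for $n\ge 1$, where it is indeed redundant given $(\star4)$ and $(\star5)$.
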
 

\begin{proof}
For the left-to-right direction, assume that $\mathbb{F}=(W,\{\nu_n\}_{n \in \mathbb{N}})$ is a graded neighbourhood frame, that is, for all $w \in W$, there exists some $A \subseteq W$ such that for all $n \in \mathbb{N}$, $\nu_n(w)= \uparrow \!\! \mathcal{P}_{\ge n}(A)$. Since $\uparrow \!\! \mathcal{P}_{\ge 0}(A) = \uparrow \!\!\mathcal{P}(A)= \mathcal{P}(W) $, item $(\star1)$ holds. Item ($\star2$) and ($\star3$) also follow directly. 

Now assume that $X\in \nu_n( w )$. Since $\nu_n(w)  = \uparrow \!\! \mathcal{P}_{\ge n} (A)$, there exists $Y \in \mathcal{P}_{\ge n} (A)$ with $Y \subseteq X$. It follows that $\abs{Y}\ge n$. Let $Y'$ be a subset of $Y$ containing exactly $n$-elements. Then $Y'$ is a minimal element in $\nu_n(w)$ and $Y' \subseteq X$. Hence, item ($\star4$) follows.

 Now assume that $Y$ is a minimal element in $\nu_n(w) = \uparrow \!\! \mathcal{P}_{\ge n}(A)$. Then $Y\subseteq A$ and $\abs{Y}=n$. Since $\nu_1(w)=\uparrow \!\! \mathcal{P}_{\ge 1}(A)$, for all $a \in A$, $\{a\} \in \nu_1(w)$. It follows that $Y$ is atomic in $\nu_1(w)$. Hence, item $(\star5)$ holds. For item ($\star6)$, assume that $\{y_1\}\not= \ldots \not =\{y_n\} \in \nu_1(w)=\uparrow \!\! \mathcal{P}_{\ge 1}(A)$. Then $\{y_1,\ldots ,y_n\} \in \uparrow \!\!\mathcal{P}_{\ge n}(A)$. It follows that $\{y_1,\ldots ,y_n\}$ is a minimal element in $\nu_n(w)$. Hence, item ($\star6$) holds.

The right-to-left direction follows from Lemma \ref{lem:0042} and \ref{lem:0043} below.
\end{proof}

\begin{lem}\label{lem:0005}
Let $\mathbb{F}=(W, \{\nu_n\}_{n\in \mathbb{N}})$ be a neighbourhood frame satisfying $(\star)$. If $X \in \nu_1(w)$, there exists $x \in X$ such that $\{x\} \in \nu_1(w)$.
\end{lem}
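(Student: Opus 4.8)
The plan is to read off the conclusion directly from conditions $(\star4)$ and $(\star5)$, specialised to the index $n=1$. The key observation is that $(\star4)$ guarantees a minimal neighbourhood sitting below $X$, while $(\star5)$ forces every minimal element of $\nu_1(w)$ to have cardinality exactly $1$, hence to be a singleton. Combining the two immediately produces the required witness.

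Concretely, first I would apply $(\star4)$ with $n=1$: since $X\in\nu_1(w)$, there exists a minimal $Y\in\nu_1(w)$ with $Y\subseteq X$. Next I would invoke $(\star5)$, again with $n=1$: as $Y$ is a minimal element of $\nu_1(w)$, we have $\abs{Y}=1$, so $Y=\{x\}$ for some $x\in W$. Because $Y\subseteq X$, this element $x$ lies in $X$, and $\{x\}=Y\in\nu_1(w)$ is exactly the witness we seek. This completes the argument.

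There is no genuine obstacle here: the statement is an immediate corollary of $(\star4)$ and $(\star5)$. The only points to verify are that both conditions are available at the index $n=1$ (they are, as each is stated for all $n\in\mathbb{N}$), and that the cardinality clause of $(\star5)$ delivers a true singleton rather than merely a small set. Note that the atomicity clause of $(\star5)$ is not even needed for this lemma, since $Y$ itself already belongs to $\nu_1(w)$; only the equality $\abs{Y}=1$ is used.
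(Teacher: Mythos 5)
Your proof is correct and follows essentially the same route as the paper's: apply $(\star4)$ to obtain a minimal $Y \in \nu_1(w)$ with $Y \subseteq X$, then specialise $(\star5)$ to $n=1$. The only difference is that you use the cardinality clause $\abs{Y}=1$, so that $Y$ itself is the singleton witness, whereas the paper instead invokes $(\star3)$ to get $Y \neq \emptyset$ and then the atomicity clause of $(\star5)$; your variant is marginally more economical, needing neither $(\star3)$ nor atomicity.
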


\begin{proof}
Assume that $X\in \nu_1(w)$. By $(\star 4)$, there exists a minimal $Y \in \nu_1(w)$ such that $Y\subseteq X$.  By $(\star 3)$, $X\not = \emptyset$ and $Y\not = \emptyset$. By $(\star 5)$, $Y$ is atomic in $\nu_1(w)$, i.e., for all $y \in Y$, $\{y \} \in \nu_1(w)$. It follows that there exists $x \in X$ such that $\{x\} \in \nu_1(w)$.
\end{proof}

\begin{lem}
Let $\mathbb{F}=(W, \{\nu_n\}_{n\in \mathbb{N}})$ be a neighbourhood frame satisfying $(\star)$. If $\nu_1(w)\neq\emptyset$, there exists a set $A\subseteq W$ such that $A$ is the maximum atomic set in $\nu_1(w)$.
\end{lem}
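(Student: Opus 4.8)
The plan is to exhibit the maximum atomic set explicitly rather than to argue for its existence by some abstract maximality principle. The natural candidate is the collection of all atoms of $\nu_1(w)$, namely $A := \{a \in W : \{a\} \in \nu_1(w)\}$. First I would check that $A$ is itself atomic in $\nu_1(w)$: this is immediate from the definition, since by construction every $a \in A$ satisfies $\{a\} \in \nu_1(w)$.

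Next I would verify that $A$ is the \emph{maximum} atomic set, i.e.\ that it contains every atomic set under inclusion. Given any $B$ atomic in $\nu_1(w)$, each $b \in B$ has $\{b\} \in \nu_1(w)$ by definition of atomicity, hence $b \in A$; thus $B \subseteq A$. Since $A$ dominates every atomic set and is itself atomic, it is the unique maximum, so no Zorn-type argument is needed.

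Finally, the hypothesis $\nu_1(w) \neq \emptyset$ enters only to guarantee that this maximum is non-degenerate. Picking any $X \in \nu_1(w)$, Lemma \ref{lem:0005} yields an $x \in X$ with $\{x\} \in \nu_1(w)$, so $x \in A$ and $A \neq \emptyset$; and if one additionally wants $A$ to be a genuine neighbourhood (as the subsequent gradedness argument, where $A$ plays the role of the successor set, will want), closure under supersets $(\star2)$ upgrades $\{x\} \in \nu_1(w)$ with $\{x\} \subseteq A$ to $A \in \nu_1(w)$. The main point is conceptual rather than technical: the real content is recognising that the set of atoms is the correct object and that maximality is then automatic once the candidate is fixed. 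There is no serious obstacle; the only place the hypothesis $\nu_1(w) \neq \emptyset$ is used is precisely to keep $A$ from collapsing to the (vacuously atomic) empty set, and Lemma \ref{lem:0005} is exactly the tool that produces a witnessing atom.
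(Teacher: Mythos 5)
Your proof is correct and follows essentially the same route as the paper: both take $A$ to be the set of all atoms of $\nu_1(w)$ (the paper phrases it as ``the union of all singletons in $\nu_1(w)$''), verify atomicity by construction, and establish maximality by noting that every element of an atomic set $B$ is itself an atom, hence lies in $A$. The only cosmetic difference is that you obtain a witnessing atom (so that $A\neq\emptyset$) by citing the preceding lemma, whereas the paper re-derives that fact inline from $(\star3)$--$(\star5)$; the content is identical.
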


\begin{proof}
%极大性和有穷.
Since $\nu_1(w)\neq\emptyset$, we assume $X \in \nu_1(w)$. By $(\star 3)$, $X \neq \emptyset$. By $(\star 4)$, there exists a minimal $X' \in \nu_1(w)$ such that $X' \subseteq A$. By $(\star 5)$, $\abs{X'} = 1$ and $X'$ is atomic in $\nu_1(w)$. Hence, we can assume $X'= \{a\}$. Let $A$ be the union of all singletons in $\nu_1(w)$. Since $\{a\} \in \nu_1(w)$,  $A \not = \emptyset$. Now we show that $A$ is the maximum atomic set in $\nu_1(w)$. Since $A$ is the union of all singletons in $\nu_1(w)$, $A$ is atomic. Let $B$ be an atomic set in $\nu_1(w)$. For any $b \in B$, by atomicity, $\{b\} \in \nu_1(w)$. It follows that $b \in A$. Therefore, $B \subseteq A$. Hence, $A$ is the maximum atomic set in $\nu_1(w)$.
\end{proof}

\begin{lem}\label{lem:0042}
Let $\mathbb{F}=(W, \{\nu_n\}_{n\in \mathbb{N}})$ be a neighbourhood frame satisfying $(\star)$. If $\nu_1(w)\neq\emptyset$, then $\nu_1(w) = \uparrow \! \! \mathcal{P}_{\ge 1}(A) $, where $A$ is the maximum atomic set in $\nu_1(w)$.
\end{lem}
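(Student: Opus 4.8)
The plan is to prove the set equality $\nu_1(w) = {\uparrow \!\! \mathcal{P}_{\ge 1}(A)}$ by establishing both inclusions, leaning on two facts already available: the characterisation of $A$ from the previous lemma (that $A$ is the union of all singletons contained in $\nu_1(w)$, and in particular is atomic in $\nu_1(w)$) and Lemma~\ref{lem:0005}, which extracts a singleton member of $\nu_1(w)$ from inside any element of $\nu_1(w)$. It is convenient first to note that $\mathcal{P}_{\ge 1}(A)$ is just the family of nonempty subsets of $A$, so that membership of $X$ in ${\uparrow \!\! \mathcal{P}_{\ge 1}(A)}$ amounts to the condition $X \cap A \neq \emptyset$. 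Rewriting the goal in this form makes both directions essentially immediate.

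For the inclusion $\nu_1(w) \subseteq {\uparrow \!\! \mathcal{P}_{\ge 1}(A)}$, I would start from an arbitrary $X \in \nu_1(w)$ and apply Lemma~\ref{lem:0005} to obtain some $x \in X$ with $\{x\} \in \nu_1(w)$. Since $A$ collects all singletons of $\nu_1(w)$, we get $x \in A$, so $\{x\}$ is a nonempty subset of $A$ contained in $X$, witnessing $X \in {\uparrow \!\! \mathcal{P}_{\ge 1}(A)}$. For the reverse inclusion, given $X \in {\uparrow \!\! \mathcal{P}_{\ge 1}(A)}$ there is a nonempty $Y \subseteq A$ with $Y \subseteq X$; choosing any $a \in Y$ gives $a \in A$, hence $\{a\} \in \nu_1(w)$ by atomicity of $A$, and then closure under supersets $(\star2)$ promotes $\{a\} \subseteq X$ to $X \in \nu_1(w)$.

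There is no serious obstacle here: essentially all the weight has already been carried by Lemma~\ref{lem:0005} (which in turn used $(\star3)$, $(\star4)$ and $(\star5)$) and by the construction of $A$ in the previous lemma. The present argument only has to glue these together with the supersets condition, so the main thing to be careful about is merely matching the definition of ${\uparrow \!\! \mathcal{P}_{\ge 1}(A)}$ correctly against the singleton witnesses supplied by those results, and keeping track of the standing hypothesis $\nu_1(w) \neq \emptyset$ that guarantees $A$ exists and is nonempty.
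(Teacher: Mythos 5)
Your proof is correct and follows essentially the same route as the paper's: the forward inclusion via Lemma~\ref{lem:0005} together with the maximality (union-of-singletons) characterisation of $A$, and the reverse inclusion via atomicity of $A$ plus closure under supersets $(\star2)$. The only cosmetic differences are your reformulation of membership in ${\uparrow \!\! \mathcal{P}_{\ge 1}(A)}$ as $X \cap A \neq \emptyset$ and your skipping of the intermediate set $Y$ in the reverse direction, neither of which changes the argument.
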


\begin{proof}
If $\nu_1(w) =\emptyset$,  then $A = \emptyset$. Then $\nu_1(w) = \uparrow \! \! \mathcal{P}_{\ge 1}(A)$. If $\nu_1(w) \not = \emptyset$,  assume that $X \in \nu_1(w)$. By Lemma \ref{lem:0005}, there exists an $x \in X$ such that $\{x\}\in \nu_1(w)$. Since $A$ is the maximum atomic set in $A$, we have $x \in A$. It follows that $\{x\} \in \mathcal{P}_{\ge 1}(A)$. Since $x \in X$, ${X} \in {\uparrow \! \! \mathcal{P}_{\ge 1}(A)}$.

Assume that ${X} \in  {\uparrow \! \! \mathcal{P}_{\ge 1}(A)}$. Then there exists $Y\in \mathcal{P}_1(A)$ such that $Y\subseteq X$. Since $A$ is atomic in $\nu_1(w)$, for all $y\in Y$, $\{y\} \in \nu_1(w)$. By ($\star2$), $\nu_1(w)$ is monotonic. Therefore, $Y\in \nu_1(w)$. Since $Y \subseteq X$, $X \in \nu_1(w)$.
\end{proof}

\begin{lem}\label{lem:0043}
Let $\mathbb{F}=(W, \{\nu_n\}_{n\in \mathbb{N}})$ be a neighbourhood frame satisfying $(\star)$.  Then for $w \in W$,
\begin{itemize}
\item[1.] If $\nu_1(w) =\emptyset$, then $\nu_n(w) = \emptyset$ for $n >1$.
\item[2.] If $\nu_1(w) \not =\emptyset$, then 
 $\nu_n(w) = \uparrow \! \! \mathcal{P}_{\ge n}(A)$ for $n>1$, where $A$ is the maximum atomic set in $\nu_1(w)$.
 \end{itemize}
\end{lem}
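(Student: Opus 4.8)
The plan is to prove the two items separately, in both cases reducing everything to the behaviour of singletons in $\nu_1(w)$ through the minimal-element conditions $(\star 4)$--$(\star 6)$.

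For item 1 I would argue by contradiction. Suppose $\nu_1(w) = \emptyset$ but $\nu_n(w) \neq \emptyset$ for some $n > 1$, and pick $X \in \nu_n(w)$. By $(\star 4)$ there is a minimal $Y \in \nu_n(w)$ with $Y \subseteq X$, and by $(\star 5)$ this $Y$ satisfies $\abs{Y} = n$ and is atomic in $\nu_1(w)$. Since $n > 1$ the set $Y$ is nonempty, so any $y \in Y$ gives $\{y\} \in \nu_1(w)$ by atomicity, contradicting $\nu_1(w) = \emptyset$. Hence $\nu_n(w) = \emptyset$ for every $n > 1$.

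For item 2, assume $\nu_1(w) \neq \emptyset$ and let $A$ be the maximum atomic set in $\nu_1(w)$, which exists by a previous lemma and is the union of all singletons belonging to $\nu_1(w)$; I would prove $\nu_n(w) = \uparrow\!\!\mathcal{P}_{\ge n}(A)$ by double inclusion. For $\nu_n(w) \subseteq \uparrow\!\!\mathcal{P}_{\ge n}(A)$, take $X \in \nu_n(w)$, extract a minimal $Y \subseteq X$ in $\nu_n(w)$ via $(\star 4)$, and use $(\star 5)$ to get $\abs{Y} = n$ together with $Y$ atomic; maximality of $A$ then forces $Y \subseteq A$, so $Y \in \mathcal{P}_{\ge n}(A)$ and therefore $X \in \uparrow\!\!\mathcal{P}_{\ge n}(A)$. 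For the reverse inclusion, take $X \in \uparrow\!\!\mathcal{P}_{\ge n}(A)$, so there is $Y \subseteq X$ with $Y \subseteq A$ and $\abs{Y} \ge n$; choose an $n$-element subset $\{y_1, \ldots, y_n\} \subseteq Y$ of pairwise distinct points, each of which satisfies $\{y_i\} \in \nu_1(w)$ because $Y \subseteq A$, and apply $(\star 6)$ to conclude that $\{y_1, \ldots, y_n\} \in \nu_n(w)$. Since $\{y_1, \ldots, y_n\} \subseteq X$, monotonicity $(\star 2)$ yields $X \in \nu_n(w)$.

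I expect both directions to be essentially routine bookkeeping; the one point demanding care is the reverse inclusion in item 2, where one must first cut the oversized witness $Y$ down to an exactly-$n$-element subset before $(\star 6)$ becomes applicable, and only afterwards climb back up to $X$ using monotonicity. The structural engine throughout is the interplay between $(\star 4)$/$(\star 5)$ (every element of $\nu_n(w)$ lies above an atomic $n$-element minimal element whose points all sit in $A$) and $(\star 6)$ (any $n$ distinct singletons of $\nu_1(w)$ generate such a minimal element), which together pin $\nu_n(w)$ down to the up-set generated by the $n$-element subsets of $A$.
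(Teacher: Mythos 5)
Your proof is correct and follows essentially the same route as the paper's: item 1 by contradiction via the atomic minimal element from $(\star 4)$/$(\star 5)$, and item 2 by double inclusion, using $(\star 4)$/$(\star 5)$ plus maximality of $A$ for one direction and $(\star 6)$ plus monotonicity $(\star 2)$ (after trimming the witness to exactly $n$ elements) for the other. The only cosmetic difference is that in item 1 you derive nonemptiness of the minimal set from $\abs{Y}=n>1$, whereas the paper invokes $(\star 3)$; both are valid.
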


\begin{proof}

For item 1, we prove by contradiction. Assume that $\nu_1(w) = \emptyset$ and for some $n >1$, $X \in \nu_n(w)$. By $(\star 3)$, $X \not = \emptyset$. By $(\star 4)$ and $(\star 5)$, there exists $X' \subseteq X$ such that $X'$ is atomic in $\nu_1(w)$.  By $(\star 3)$, $X' \not = \emptyset$. By atomicity of $X'$, $\nu_1(w) \not = \emptyset$, contradiction .

Now we prove item 2 and assume that $X \in \nu_n(w)$. By $(\star 4)$, there exists a minimal element of $\nu_n(w)$ such that $Y\subseteq X$. By $(\star5)$, $\abs{Y} \ge n$ and $Y$ is atomic in $\nu_1(w)$. Since $A$ is the maximum atomic set of $\nu_1(w)$, $Y\subseteq A$. Since $\abs{Y} \ge n$, $Y \in  \mathcal{P}_{\ge n}(A)$. Since $Y \subseteq X$, $X \in \uparrow \! \! \mathcal{P}_{\ge n}(A)$.

Assume that $X \in \uparrow \! \! \mathcal{P}_{\ge n} (A)$. Then there exists $Y \in  \mathcal{P}_{\ge n} (A)$ such that $Y \subseteq X$. It follows that $\abs{Y} \ge n$. Since $A$ is the maximum atomic set of $\nu_1(w)$, $Y$ is atomic in $\nu_1(w)$. Hence, there exist distinct $y_1,\ldots ,y_n \in Y$ such that $\{y_1\}, \ldots, \{y_n\}\in \nu_1(w)$ and $y_1 \not = \ldots \not = y_n$. By $(\star 6)$, $\bigcup_{1\le i \le n}\{y_i\}$ is a minimal element in $\nu_n(w)$. Since $\bigcup_{1\le i \le n}\{y_i\}\subseteq Y \subseteq X$ and $\nu_n(w)$ is monotonic by ($\star2$), $X \in \nu(w)$.
\end{proof}

\section{Graded neighbourhood frames are not modally definable}\label{sec:modalundef}

 A class $S_N$ of neighbourhood frames is \emph{modally definable} if there exists a set of modal formulas $\Delta$ such that $\mathbb{F} \Vdash \Delta$ iff $\mathbb{F} \in S_N$. In this section, we show that the class of graded neighbourhood frames is not modally definable. It is well known that if the class of neighbourhood frames is modally definable, then it is closed under bounded morphic images. Below we show that the class of graded neighbourhood frames is not closed under bounded morphic images (by exhibiting a counterexample), so we conclude that it is not modally definable.

Given a function $f:W\to W'$ and $X\subseteq W$, define $f[X]:= \{f(x) : x\in X  \}$.
\begin{defn}\label{defn:1801}
Let $\mathbb{F}=(W,\{\nu_n \}_{n\in \mathbb{N}})$ and $\mathbb{F}'=(W,\{\nu'_n \}_{n\in \mathbb{N}})$ be neighbourhood frames.  
A \emph{bounded morphism} from $\mathbb{F}$ to $\mathbb{F}'$  is a function $f:W \to W'$ satisfying for $n \in \mathbb{N}$

$(BM1_n)$ If $X \in \nu_n (w)$, then $f[X] \in \nu'_n (f(w))$. 

$(BM2_n)$ If $X' \in \nu'_n(f(w))$, then there exists $X \subseteq W$ such that $f[X] \subseteq X'$ and $X\in \nu(w)$.

If there is a surjective bounded morphism from $\mathbb{F}$ to $\mathbb{F'}$, we say that $\mathbb{F'}$ is a \emph{bounded morphic image} of $\mathbb{F}$.
\end{defn}

\begin{prop}[Prop. 5.3 of \cite{hansen2003monotonic}]\label{prop:0013}
Let $\mathbb{F}$ and $\mathbb{F}'$ be neighbourhood frames. If $\mathbb{F'}$ is a bounded morphic image of $\mathbb{F}$, then $\mathbb{F} \Vdash \varphi$ implies $\mathbb{F}' \Vdash \varphi$.
\end{prop}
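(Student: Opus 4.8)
The plan is to prove the standard truth-preservation lemma along a surjective bounded morphism and then read off the preservation of validity. Since $\mathbb{F}'$ is a bounded morphic image of $\mathbb{F}$, fix a surjective bounded morphism $f:W\to W'$. To establish that $\mathbb{F}\Vdash\varphi$ implies $\mathbb{F}'\Vdash\varphi$, I take an arbitrary valuation $V'$ on $\mathbb{F}'$ and an arbitrary point $w'\in W'$, and aim to show $(\mathbb{F}',V'),w'\Vdash\varphi$. The crucial device is the \emph{pullback valuation} $V$ on $\mathbb{F}$ defined by $V(p):=f^{-1}[V'(p)]=\{w\in W: f(w)\in V'(p)\}$ for every $p\in\msf{Prop}$; this is exactly the valuation that turns $f$ into a morphism of models rather than merely of frames.

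The heart of the argument is the following claim, proved by induction on the complexity of $\psi$: for every $\mathcal{L}_g$-formula $\psi$ and every $w\in W$, $(\mathbb{F},V),w\Vdash\psi$ iff $(\mathbb{F}',V'),f(w)\Vdash\psi$; equivalently, $\den{\psi}_{(\mathbb{F},V)}=f^{-1}[\den{\psi}_{(\mathbb{F}',V')}]$. The atomic case is immediate from the definition of $V$, and the Boolean cases follow from the induction hypothesis, since $f^{-1}$ commutes with complement and union. The only substantive case is $\psi=\D_n\chi$, and this is where the two morphism conditions $(BM1_n)$ and $(BM2_n)$ do the work.

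For $\psi=\D_n\chi$ I argue both directions. Suppose first $\den{\chi}_{(\mathbb{F},V)}\in\nu_n(w)$. By $(BM1_n)$, $f[\den{\chi}_{(\mathbb{F},V)}]\in\nu'_n(f(w))$; the induction hypothesis gives $\den{\chi}_{(\mathbb{F},V)}=f^{-1}[\den{\chi}_{(\mathbb{F}',V')}]$, and since $f$ is surjective $f[f^{-1}[Y]]=Y$, so $f[\den{\chi}_{(\mathbb{F},V)}]=\den{\chi}_{(\mathbb{F}',V')}$, whence $\den{\chi}_{(\mathbb{F}',V')}\in\nu'_n(f(w))$, i.e.\ $(\mathbb{F}',V'),f(w)\Vdash\D_n\chi$. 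Conversely, suppose $\den{\chi}_{(\mathbb{F}',V')}\in\nu'_n(f(w))$. By $(BM2_n)$ there is $X\in\nu_n(w)$ with $f[X]\subseteq\den{\chi}_{(\mathbb{F}',V')}$, so $X\subseteq f^{-1}[\den{\chi}_{(\mathbb{F}',V')}]=\den{\chi}_{(\mathbb{F},V)}$ by the induction hypothesis; invoking that each $\nu_n$ is closed under supersets, this upgrades to $\den{\chi}_{(\mathbb{F},V)}\in\nu_n(w)$, i.e.\ $(\mathbb{F},V),w\Vdash\D_n\chi$. This completes the induction.

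Finally, using surjectivity once more, choose $w\in W$ with $f(w)=w'$. Since $\mathbb{F}\Vdash\varphi$ we have $(\mathbb{F},V),w\Vdash\varphi$ for the pullback valuation $V$, and the claim then yields $(\mathbb{F}',V'),w'\Vdash\varphi$; as $V'$ and $w'$ were arbitrary, $\mathbb{F}'\Vdash\varphi$. The two delicate points, and the only places where more than bookkeeping is required, are the \emph{exact} equality $f[\den{\chi}_{(\mathbb{F},V)}]=\den{\chi}_{(\mathbb{F}',V')}$ in the forward modal step, which is precisely where surjectivity is genuinely used, and the appeal to closure under supersets in the backward modal step, which is where the monotonic character of the neighbourhood functions enters; everything else is routine.
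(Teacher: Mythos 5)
Your argument is correct, and since the paper offers no proof of its own for this proposition (it is imported by citation from Hansen's work), the natural benchmark is the standard argument, which is exactly what you give: pull back the valuation along $f$, prove $\den{\psi}_{(\mathbb{F},V)}=f^{-1}[\den{\psi}_{(\mathbb{F}',V')}]$ by induction, using $(BM1_n)$ plus surjectivity ($f[f^{-1}[Y]]=Y$) for one direction of the modal case and $(BM2_n)$ for the other, and then transfer validity via surjectivity. All steps check out.

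One point deserves emphasis: your appeal to closure under supersets in the backward modal step is not optional bookkeeping, and it is not licensed by the statement as literally written, which speaks of arbitrary neighbourhood frames. Under the paper's semantics for arbitrary neighbourhood frames ($\mathbb{M},w\Vdash\D_n\chi$ iff $\den{\chi}_{\mathbb{M}}\in\nu_n(w)$), the proposition is actually false without monotonicity of $\mathbb{F}$: take $\mathbb{F}=(\{a,b\},\{\nu_n\})$ with $\nu_n(a)=\nu_n(b)=\{\{a\}\}$, $\mathbb{F}'=(\{c\},\{\nu'_n\})$ with $\nu'_n(c)=\{\{c\}\}$, and $f(a)=f(b)=c$; then $f$ is surjective and satisfies $(BM1_n)$ and $(BM2_n)$, yet $\neg\D_1\top$ is valid on $\mathbb{F}$ (since $\{a,b\}\notin\nu_1(w)$ for both $w$) and fails on $\mathbb{F}'$ (since $\{c\}\in\nu'_1(c)$). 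So what you have proved is the proposition Hansen actually states, namely for monotonic neighbourhood frames, which is also the only setting in which this paper ever applies it (graded neighbourhood frames are monotonic, as is the frame $\mathbb{F}'$ of Example \ref{exam:0034}). Alternatively, had you read $\D_n$ via the existential clause used for monotonic models, the induction would go through for arbitrary frames without invoking supplementation; under the membership clause, the monotonicity hypothesis you used is essential and should be stated explicitly.
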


\begin{prop}\label{prop:0030}
If a class of neighbourhood frames is modally definable, then it is closed under bounded morphic images.
\end{prop}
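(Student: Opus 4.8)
The plan is to unfold the definition of modal definability and then apply the validity-preservation result of Proposition \ref{prop:0013}; no extra machinery is needed. Let $S_N$ be a modally definable class of neighbourhood frames, witnessed by a set $\Delta$ of modal formulas, so that by definition $\mathbb{F} \Vdash \Delta$ iff $\mathbb{F} \in S_N$, for every neighbourhood frame $\mathbb{F}$. I want to show that whenever $\mathbb{F} \in S_N$ and $\mathbb{F}'$ is a bounded morphic image of $\mathbb{F}$, then $\mathbb{F}' \in S_N$ as well.

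First I would fix such an $\mathbb{F} \in S_N$ together with a surjective bounded morphism $f : \mathbb{F} \to \mathbb{F}'$. From $\mathbb{F} \in S_N$ and the defining property of $\Delta$, I immediately obtain $\mathbb{F} \Vdash \varphi$ for every $\varphi \in \Delta$.

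Next, for each $\varphi \in \Delta$ I would invoke Proposition \ref{prop:0013}, which states precisely that a bounded morphic image preserves frame validity: from $\mathbb{F} \Vdash \varphi$ it follows that $\mathbb{F}' \Vdash \varphi$. Since this holds for every $\varphi \in \Delta$, I conclude $\mathbb{F}' \Vdash \Delta$, and hence $\mathbb{F}' \in S_N$ by applying the defining property of $\Delta$ in the other direction. This gives the desired closure under bounded morphic images.

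There is no real obstacle here: the entire substantive content has already been isolated in Proposition \ref{prop:0013}, and what remains is the routine \emph{validity is preserved, hence a validity-defined class is closed} argument, applied formula-by-formula across the defining set $\Delta$. The only point to keep in mind is that one uses the biconditional characterisation of $S_N$ by $\Delta$ in both directions---once to extract $\mathbb{F}\Vdash\Delta$ from membership, and once to infer membership of $\mathbb{F}'$ from $\mathbb{F}'\Vdash\Delta$.
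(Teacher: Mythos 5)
Your proof is correct and is essentially identical to the paper's own argument: both unfold the definition of modal definability via $\Delta$, apply Proposition \ref{prop:0013} to transfer validity of each formula in $\Delta$ from $\mathbb{F}$ to its bounded morphic image $\mathbb{F}'$, and then use the defining biconditional in the reverse direction to conclude $\mathbb{F}' \in S_N$. No gaps; the formula-by-formula phrasing is just a slightly more explicit rendering of the same one-line argument.
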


\begin{proof}
Let $S_N$ be a class of neighbourhood frames defined by a set of formulas $\Delta$, $\mathbb{F}\in S_N$ and $\mathbb{F}'$ a bounded morphic image of $\mathbb{F}$.
Since $\mathbb{F}\in S_N$, $\mathbb{F}\Vdash \Delta$. By Proposition \ref{prop:0013}, $\mathbb{F}'\Vdash \Delta$ and therefore $\mathbb{F}' \in S_N$.
\end{proof}

\begin{exam}\label{exam:0034}
Consider neighbourhood frames $\mathbb{F}=(\{a,b\},\{\nu_n\}_{n \in \mathbb{N}})$ such that for $n \in \mathbb{N}$, $\nu_n(a)=\nu_n(b)= {\uparrow \! \! \mathcal{P}_{\ge n}(\{a,b\})} $ and  $\mathbb{F}'=(\{c\}, \{\nu'_n\}_{n \in \mathbb{N}})$ such that $\nu'_0(c)= \{ \emptyset, \{c\}\}$, $\nu'_1(c)=\nu'_2(c)= \{ \{c\}\}$ and $\nu'_k(c)=\emptyset$ for $k > 2$. By Definition \ref{def:1744}, $\mathbb{F}$ is a graded neighbourhood frame. As for $\mathbb{F}'$, we have ${\nu_1(c)} = {\uparrow \!\! \mathcal{P}_{\ge1} (\{c\})}$ while ${\nu_2(c)} \not =  {\uparrow \!\! \mathcal{P}_{\ge2} (\{c\})}$. Therefore, $\mathbb{F}'$ is not a graded neighbourhood frame. It can be verified that function $f: \{a ,b \} \to \{ c\}$, with $f(a) =f(b) =c$, is a subjective bounded morphism from $\mathbb{F}$ to $\mathbb{F}'$. Therefore, the class of graded neighbourhood frames is not closed under bounded morphic images. 
\end{exam}

%It follows directly from the definition and the proof is omitted for the sake of space.

%\begin{proof}
%It follows from definition that $f$ is surjective.
%If $n=0$, $\nu_0(a) = \nu_0(b) =\{\emptyset, \{a\}, \{b\}, \{a ,b \}\}$ and $\nu'_0(c)= \{ \emptyset, \{c\}\}$. By definition, $f[\emptyset ]= \emptyset$, $f[\{a\}]= f[\{b\}] = \{c\}$ and $f[\{a ,b\}]= \{c\}$. So $f$ satisfies $(BM1_0)$. For $\emptyset \in \nu'_0(f(a)), \nu'_0(f(b))$, we have $f[\emptyset] \subseteq \emptyset$ and $\emptyset \in \nu_0(a), \nu_0(b)$. For $\{c\} \in  \nu'_0(f(a)), \nu'_0(f(b))$, we have $f[\{a\}] \subseteq \{c\}$ and $\{a\} \in \nu_0(a), \nu_0(b)$. So $f$ satisfies $(BM2_0)$.
%If $n=1$, $\nu_0(a) = \nu_0(b) = \{\{a\},\{a,b\}\}$ and $\nu_1'(c)=\{\{c\}\}$. By definition, $f[\{a\}]= f[\{b\}] = \{c\}$ and $f[\{a ,b\}]= \{c\}$. So $f$ satisfies $(BM1_1)$. For $\{c\} \in  \nu'_1(f(a)), \nu'_1(f(b))$, we have $f[\{a\}] \subseteq \{c\}$ and $\{a\} \in \nu_1(a), \nu_1(b)$. So $f$ satisfies $(BM2_1)$.
%If $n=2$, $\nu_0(a) = \nu_0(b) = \{\{a,b\}\}$ and $\nu_2'(c)=\{\{c\}\}$. By definition, $f[\{a ,b\}]= \{c\}$. So $f$ satisfies $(BM1_2)$. For $\{c\} \in  \nu'_1(f(a)), \nu'_1(f(b))$, we have $f[\{a,b\}] \subseteq \{c\}$ and $\{a,b\} \in \nu_1(a), \nu_1(b)$. So $f$ satisfies $(BM2_2)$.
%If $n\ge2$,  $\nu_n(a) = \nu_n(b)= \uparrow \! \! \mathcal{P}_{\ge 2}(\{a\}) = \emptyset$ and $\nu_n(c) =\emptyset$. It follows that $f$ satisfies $(BM1_n)$ and $(BM2_n)$. This completes the proof that $f$ is a surjective bounded morphism from $\mathbb{F}$ to $\mathbb{F}'$.
%\end{proof}

\begin{prop}
The class of graded neighbourhood frames is not modally definable.
\end{prop}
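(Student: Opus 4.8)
The plan is to prove the contrapositive directly from the two preceding results, so the argument is short. First I would assume, toward a contradiction, that the class of graded neighbourhood frames \emph{were} modally definable. Then Proposition~\ref{prop:0030} applies immediately: any modally definable class of neighbourhood frames is closed under bounded morphic images. Hence the class of graded neighbourhood frames would be closed under bounded morphic images.

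Next I would invoke Example~\ref{exam:0034}, which already does the real work. There we have a graded neighbourhood frame $\mathbb{F}=(\{a,b\},\{\nu_n\}_{n\in\mathbb{N}})$ together with a surjective bounded morphism $f:\{a,b\}\to\{c\}$ onto the frame $\mathbb{F}'=(\{c\},\{\nu'_n\}_{n\in\mathbb{N}})$, and it is checked there that $\mathbb{F}'$ is \emph{not} a graded neighbourhood frame (because $\nu'_2(c)\neq\mathord{\uparrow}\mathcal{P}_{\ge 2}(\{c\})$). Thus $\mathbb{F}'$ is a bounded morphic image of a graded neighbourhood frame that fails to be graded, which contradicts closure under bounded morphic images. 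I would conclude that the class is not modally definable.

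Honestly, there is no real obstacle left at the level of this statement: the entire difficulty has been absorbed into the earlier ingredients. The genuine content lies in Proposition~\ref{prop:0030} (that modal definability forces closure under bounded morphic images, itself resting on Proposition~\ref{prop:0013}) and in the explicit counterexample of Example~\ref{exam:0034}, where one must verify both that $f$ satisfies $(BM1_n)$ and $(BM2_n)$ for every $n$ and that the image frame violates Definition~\ref{def:1744}. Given those, the remaining step is a one-line application of the contrapositive, so my proof would simply chain them together.
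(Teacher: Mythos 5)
Your proof is correct and follows exactly the paper's own argument: the paper proves this proposition in one line by combining Example~\ref{exam:0034} with the contrapositive of Proposition~\ref{prop:0030}, just as you do. Your observation that all the substantive work lies in those earlier ingredients is also accurate.
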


\begin{proof}
It follows from Example \ref{exam:0034} and the contraposition of Proposition \ref{prop:0030}. 
\end{proof}

\section{Bisimulation}\label{sec:bisimulation}
The notion of graded tuple bisimulation was first proposed in de Rijke \cite{de2000note}. In this section, we obtain a new definition of graded bisimulation by substituting $\nu_n(w)$ with $\uparrow \! \! \mathcal{P}_{\ge n}(R[w])$ in the definition of monotonic bisimulation. And we prove that the new definition is equivalent to the old one (cf. Proposition \ref{prop:1812} and \ref{prop:1813}). 

\subsection{From monotonic bisimulation to graded bisimulation}
\begin{defn}
[Monotonic bisimulation, Def. 4.10 of \cite{hansen2003monotonic}] \label{defn:mon}
Suppose that $\mathbb{M}=(W,\{\nu_n\}_{n \in \mathbb{N}},V)$ and $\mathbb{M}'=(W',\{\nu'_n\}_{n \in \mathbb{N}},V')$ are monotonic neighbourhood models. A non-empty relation $Z\subseteq W\times W'$ is a \emph{monotonic bisimulation} (notation: $Z:\mathbb{M} \leftrightarroweq_m \mathbb{M}'$) provided that
\begin{itemize}
\item{(\bf{Prop})} If $wZw'$, then $w$ and $w'$ satisfy the same proposition letters.

\item{(\bf{Forth})} If $wZw'$ and $X \in \nu_n(w)$, then there is $X'\subseteq W'$ such that $X' \in \nu_n'(w')$ and $\forall x' \in X' \exists x\in X: xZx'$.

\item{(\bf{Back})} If $wZw'$ and $X' \in \nu_n'(w')$, then there is  $X\subseteq W$ such that $X \in \nu_n(w)$ and $\forall x\in X \exists x'\in X': xZx'$.
\end{itemize}
If $w\in \mathbb{M}$ and $w'\in \mathbb{M'}$, then $w$ and $w'$ are \emph{monotonic bisimilar states} (notation: $\mathbb{M},w \leftrightarroweq_m \mathbb{M'},w'$) if there is a bisimulation $Z:\mathbb{M} \leftrightarroweq_m \mathbb{M}'$ with $wZw'$.
\end{defn}

\begin{prop}[Prop. 4.11 of \cite{hansen2003monotonic}]\label{prop:1932}
Let $\mathbb{M}=(W,\{\nu_n\}_{n \in \mathbb{N}},V)$ and $\mathbb{M}'=(W',\{\nu'_n\}_{n \in \mathbb{N}},V')$ be monotonic neighbourhood models. If $\mathbb{M},w \leftrightarroweq_m \mathbb{M}',w'$, then for $\mathcal{L}_g$-formula $\varphi$, $\mathbb{M},w\Vdash \varphi$ iff $\mathbb{M}',w' \Vdash \varphi$.
%$\mathbb{M},w \equiv_m \mathbb{M}',w'$.
\end{prop}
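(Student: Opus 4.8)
The plan is to prove, by induction on the complexity of $\varphi$, the slightly stronger statement that for every monotonic bisimulation $Z \colon \mathbb{M} \leftrightarroweq_m \mathbb{M}'$ and every pair $(v,v')$ with $vZv'$, we have $\mathbb{M},v \Vdash \varphi$ iff $\mathbb{M}',v' \Vdash \varphi$. The proposition then follows by instantiating at the pair $(w,w')$ supplied by the hypothesis $\mathbb{M},w \leftrightarroweq_m \mathbb{M}',w'$. Strengthening the claim to \emph{all} $Z$-related pairs, rather than to $(w,w')$ alone, is what makes the induction go through, since the neighbourhood clause for $\D_n$ forces us to reason about the truth of subformulas at worlds other than $v$ and $v'$.

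The base case $\varphi = p$ is immediate from \textbf{(Prop)}, and the Boolean cases $\varphi = \neg\psi$ and $\varphi = \psi_1 \lor \psi_2$ are routine appeals to the induction hypothesis, applied to the same pair $(v,v')$. The only substantive case is $\varphi = \D_n \psi$, and here I would use the monotonic truth condition recalled in Section~\ref{section:classical}, namely that $\mathbb{M},v \Vdash \D_n \psi$ iff there exists $X \in \nu_n(v)$ with $X \subseteq \den{\psi}_{\mathbb{M}}$. Suppose $vZv'$ and $\mathbb{M},v \Vdash \D_n \psi$; taking $X = \den{\psi}_{\mathbb{M}}$ we have $\den{\psi}_{\mathbb{M}} \in \nu_n(v)$. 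Applying \textbf{(Forth)} yields some $X' \in \nu_n'(v')$ such that every $x' \in X'$ is $Z$-related to some $x \in \den{\psi}_{\mathbb{M}}$. For each such $x'$ the induction hypothesis (applied to the pair $(x,x')$ and the subformula $\psi$) gives $\mathbb{M}',x' \Vdash \psi$, so $X' \subseteq \den{\psi}_{\mathbb{M}'}$; since $\nu_n'$ is supplemented and $X' \in \nu_n'(v')$, we conclude $\den{\psi}_{\mathbb{M}'} \in \nu_n'(v')$, that is, $\mathbb{M}',v' \Vdash \D_n \psi$. The converse implication is entirely symmetric, using \textbf{(Back)} in place of \textbf{(Forth)}.

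The hard part is precisely this modal case, and the subtlety to watch is that \textbf{(Forth)} delivers only a one-directional covering condition, $\forall x' \in X'\, \exists x \in X$ with $xZx'$, which is exactly what is needed to push $X'$ inside $\den{\psi}_{\mathbb{M}'}$ through the induction hypothesis. The final step then crucially relies on supplementation of $\nu_n'$ to upgrade the witness $X'$ to the full truth set $\den{\psi}_{\mathbb{M}'}$; without monotonicity this last inference would fail, so the argument genuinely uses the monotonicity assumption on both models. No other case presents difficulty, and the induction closes once the two symmetric directions of the modal case are established.
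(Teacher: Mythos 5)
Your proof is correct: the strengthened induction hypothesis (invariance at \emph{all} $Z$-related pairs), the use of \textbf{(Prop)} for atoms, and the modal case combining \textbf{(Forth)}/\textbf{(Back)} with supplementation to upgrade the witness neighbourhood to the full truth set is exactly the right argument. Note that the paper itself offers no proof of this proposition — it is imported as Prop.~4.11 of \cite{hansen2003monotonic} — and your argument is essentially the standard one given in that cited source, so there is nothing to reconcile.
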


Substituting $\nu_n(w)$ in Definition \ref{defn:mon} with $\uparrow \! \! \mathcal{P}_{\ge n}(R[w])$, we have:
\begin{defn}
[Graded bisimulation]\label{defn:gradedbisi}
 Suppose that $\mathcal{F}=(W,R,V)$ and $\mathcal{M}'=(W',R',V)$ are Kripke models. A  non-empty relation $Z\subseteq W\times W'$ is a \emph{graded bisimulation} (notation: $Z:\mathcal{M} \leftrightarroweq_g \mathcal{M}'$) provided that
\begin{itemize}
\item{(\bf{Prop})} If $wZw'$, then $w$ and $w'$ satisfy the same proposition letters.
\item{(\bf{Forth})} If $wZw'$ and $X \in \uparrow \! \! \mathcal{P}_{\ge n}(R[w])$, then there is an $X'\subseteq W'$ such that $X' \in \uparrow \! \! \mathcal{P}_{\ge n}(R'[w'])$ and $\forall x' \in X' \exists x\in X: xZx'$.

\item{(\bf{Back})} If $wZw'$ and $X' \in \uparrow \! \! \mathcal{P}_{\ge n}(R'[w'])$, then there is an $X\subseteq W$ such that $X \in \uparrow \! \! \mathcal{P}_{\ge n}(R[w])$ and $\forall x\in X \exists x'\in X': xZx'$.
\end{itemize}
If $w\in \mathcal{M}$ and $w'\in \mathcal{M'}$, then $w$ and $w'$ are \emph{graded bisimilar states} (notation: $\mathcal{M},w \leftrightarroweq_g \mathcal{M'},w'$) if there is a bisimulation $Z:\mathcal{M} \leftrightarroweq_g \mathcal{M}'$ with $wZw'$.
\end{defn}

\begin{prop}\label{prop:1958}
Let $\mathcal{M}=(W,R,V)$ and $\mathcal{M}'=(W',R',V')$ be Kripke models. If $\mathcal{M},u \leftrightarroweq_g \mathcal{M}',u'$, then   $\mathcal{M},u\equiv_k \mathcal{M}',u'$.
\end{prop}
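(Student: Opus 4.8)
The plan is to reduce the statement to the already-established preservation result for monotonic bisimulations, Proposition \ref{prop:1932}, by passing from the Kripke models to their associated graded neighbourhood models under the operation $(.)^\bullet$ of Definition \ref{defn:bullet}. The crucial observation is that the three clauses of graded bisimulation in Definition \ref{defn:gradedbisi} are obtained from those of monotonic bisimulation in Definition \ref{defn:mon} precisely by replacing each $\nu_n(w)$ with $\uparrow \! \! \mathcal{P}_{\ge n}(R[w])$, which is exactly the neighbourhood function of $\mathcal{F}^\bullet$. Hence a graded bisimulation between Kripke models is literally a monotonic bisimulation between the corresponding neighbourhood models, and the result follows by composition.

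Concretely, I would proceed as follows. Write $\mathcal{F}=(W,R)$ and $\mathcal{F}'=(W',R')$, and form the monotonic neighbourhood models $\mathbb{M}=(\mathcal{F}^\bullet,V)$ and $\mathbb{M}'=((\mathcal{F}')^\bullet,V')$; these are monotonic since, as noted after Definition \ref{defn:bullet}, each $\nu_n$ of the form $\uparrow \! \! \mathcal{P}_{\ge n}(R[w])$ is supplemented. First I would verify that the relation $Z$ witnessing $\mathcal{M},u \leftrightarroweq_g \mathcal{M}',u'$ also witnesses $\mathbb{M},u \leftrightarroweq_m \mathbb{M}',u'$: the clause (\textbf{Prop}) is unchanged, while (\textbf{Forth}) and (\textbf{Back}) of Definition \ref{defn:gradedbisi} coincide with those of Definition \ref{defn:mon} once we substitute $\nu_n(w)=\uparrow \! \! \mathcal{P}_{\ge n}(R[w])$ and $\nu_n'(w')=\uparrow \! \! \mathcal{P}_{\ge n}(R'[w'])$, by the very definition of $\mathcal{F}^\bullet$ and $(\mathcal{F}')^\bullet$. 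Next, Proposition \ref{prop:1932} gives that $\mathbb{M},u\Vdash\varphi$ iff $\mathbb{M}',u'\Vdash\varphi$ for every $\mathcal{L}_g$-formula $\varphi$. Finally, Proposition \ref{prop:0004} yields $\mathcal{M},u\Vdash\varphi$ iff $\mathbb{M},u\Vdash\varphi$ and $\mathcal{M}',u'\Vdash\varphi$ iff $\mathbb{M}',u'\Vdash\varphi$; chaining these three equivalences gives $\mathcal{M},u\Vdash\varphi$ iff $\mathcal{M}',u'\Vdash\varphi$, which is exactly $\mathcal{M},u\equiv_k \mathcal{M}',u'$.

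The argument is essentially a transport of structure, so there is no genuinely hard computation; the only point requiring care is the identification in the first step, namely checking that nothing is lost when reading the graded clauses as monotonic clauses over $\mathcal{F}^\bullet$. This is immediate from the definitions, since $\uparrow \! \! \mathcal{P}_{\ge n}(R[w])$ is by construction the value $\nu_n(w)$ of the neighbourhood function of $\mathcal{F}^\bullet$, so the membership conditions $X\in\uparrow \! \! \mathcal{P}_{\ge n}(R[w])$ and $X\in\nu_n(w)$ are synonymous. Once this bookkeeping is in place, the conclusion is a direct consequence of Propositions \ref{prop:1932} and \ref{prop:0004}.
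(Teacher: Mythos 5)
Your proposal is correct and follows exactly the paper's own argument: read the graded bisimulation $Z$ as a monotonic bisimulation between $\mathcal{M}^\bullet$ and $\mathcal{M}'^\bullet$ (by the substitution $\nu_n(w)=\uparrow\!\!\mathcal{P}_{\ge n}(R[w])$ from Definition~\ref{defn:bullet}), then chain Proposition~\ref{prop:0004}, Proposition~\ref{prop:1932}, and Proposition~\ref{prop:0004}. No differences worth noting.
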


\begin{proof}
Since $\mathcal{M},u \leftrightarroweq_g \mathcal{M}',u'$, there exists a non-empty relation $Z\subseteq W \times W'$ such that $Z:\mathcal{M} \leftrightarroweq_g \mathcal{M}'$ and $uZu'$. 
For neighbourhood frames $\mathcal{M}^\bullet=(W,\{\nu_n\}_{n \in \mathbb{N}},V)$ and $\mathcal{M'}^\bullet=(W
,\{\nu'_n\}_{n \in \mathbb{N}},V')$, by definition, for $w \in W$ and $w' \in W'$, $\nu_n(w)=  \uparrow \! \! \mathcal{P}_{\ge n}(R[w])$ and $\nu'_n (w') = {\uparrow \! \! \mathcal{P}_{\ge n}(R'[w'])}$. Substituting $ \uparrow \! \! \mathcal{P}_{\ge n}(R[w])$ with $\nu_n(w)$ and $ \uparrow \! \! \mathcal{P}_{\ge n}(R'[w'])$ with $\nu'_n(w')$ in the definition of $Z:\mathcal{M} \leftrightarroweq_g \mathcal{M}'$, we have  $Z:\mathcal{M}^\bullet,u \leftrightarroweq_m \mathcal{M}'^\bullet,u'$ and $uZu'$. For all formulas $\varphi$, that $\mathcal{M},u\Vdash \varphi$ iff $\mathcal{M}',u' \Vdash \varphi$ can be proved as follows:
\[
\begin{array}{lcll}
\mathcal{M},u\Vdash \varphi 
& \text{~~iff~~} & \mathcal{M}^\bullet,u \Vdash \varphi & \text{Proposition}~ \ref{prop:0004}\\
& \text{~~iff~~} & \mathcal{M}'^\bullet,u' \Vdash \varphi & \text{Proposition}~ \ref{prop:1932}\\
& \text{~~iff~~} & \mathcal{M}',u'\Vdash \varphi & \text{Proposition}~ \ref{prop:0004}\\
\end{array}
\]
\end{proof}

\subsection{Graded bisimulation is equivalent to graded tuple bisimulation}
In the rest of this section, we recall the definition of graded tuple bisimulation in de Rijke \cite{de2000note} and show that it is equivalent to Definition \ref{defn:gradedbisi}. Given a set $X$, denote by $\mathcal{P}_{<\omega} (X)$ the set of finite subsets of $X$. We now get:
\begin{defn}[Graded tuple bisimulation]\label{defn:gbt}
Let $\mathcal{M}=(W,R,V)$ and $\mathcal{M}=(W',R',V')$ be two Kripke models. A tuple $\mathcal{Z}=(\mathcal{Z}_1,\mathcal{Z}_2,\ldots)$ of relations is called \emph{graded tuple bisimulation} between $\mathcal{M}$ and $\mathcal{M}'$ (notation: $\mathcal{Z}:\mathcal{M}\leftrightarroweq_{gt} \mathcal{M}')$ iff: %it satisfies the following requirements:
\begin{itemize}
\item[(1)] $\mathcal{Z}_1$ is non-empty;
\item[(2)] for all $i$, $\mathcal{Z}_i\subseteq \mathcal{P}_{<\omega} (W_1) \times \mathcal{P}_{< \omega}(W_2)$;
\item[(3)] if $X\mathcal{Z}_i X'$, then $\abs{X}=\abs{X'} =i$; 
\item[(4)] if $\{w\}\mathcal{Z}_1 \{w'\}$, then $w$ and $w'$ satisfy the same proposition letters;
\item[(5)] if $\{w\} \mathcal{Z}_1\{w'\}$, $X\subseteq R[w]$ and $\abs{X}=i \ge 1$, then there exists $X' \in \mathcal{P}_{< \omega}(W')$ with $X' \subseteq R'[w']$ and $X\mathcal{Z}_iX'$;
\item[(6)] if $\{w\}\mathcal{Z}_1\{w'\}$, $X'\subseteq R[w']$ and $\abs{X'} = i \ge 1$, then there exists $X\in \mathcal{P}_{<\omega} (W)$ with $X\subseteq R[w]$ and $X\mathcal{Z}_iX'$;
\item[(7)] if $X\mathcal{Z}_iX'$, then (a)  $\forall x\in X \exists x'\in X': \{x\}\mathcal{Z}_1\{x'\}$, and  (b) $\forall x' \in X' \exists x\in X: \{x\}\mathcal{Z}_1\{x'\}$.
\end{itemize}
\end{defn}

\begin{prop}\label{prop:1812}
Let $\mathcal{M}=(W,R,V)$ and $\mathcal{M}'=(W',R',V')$ be Kripke models and $\mathcal{Z}=(\mathcal{Z}_1,\mathcal{Z}_2,\ldots)$ a tuple of relations such that $\mathcal{Z}: \mathcal{M}\leftrightarroweq_{gt}\mathcal{M}'$. Define $Z\subseteq W\times W'$ to be a relation such that $wZw'$ iff $\{w\}\mathcal{Z}_1\{w'\}$. Then $Z:\mathcal{M}\leftrightarroweq_g \mathcal{M}'$.
\end{prop}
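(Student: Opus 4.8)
The plan is to verify directly that the relation $Z$ (defined by $wZw'$ iff $\{w\}\mathcal{Z}_1\{w'\}$) satisfies all the requirements of Definition \ref{defn:gradedbisi}: non-emptiness together with \textbf{(Prop)}, \textbf{(Forth)} and \textbf{(Back)}. Non-emptiness and \textbf{(Prop)} come for free. Since $\mathcal{Z}_1 \neq \emptyset$ by clause (1), there are sets $X,X'$ with $X\mathcal{Z}_1 X'$, and clause (3) forces $\abs{X}=\abs{X'}=1$, so $X=\{w\}$ and $X'=\{w'\}$; hence $wZw'$ and $Z\neq\emptyset$. Clause (4) then gives \textbf{(Prop)} immediately.

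For \textbf{(Forth)}, fix $n\in\mathbb{N}$, assume $wZw'$, i.e.\ $\{w\}\mathcal{Z}_1\{w'\}$, and suppose $X \in \uparrow \! \!\mathcal{P}_{\ge n}(R[w])$. I first dispose of the degenerate case $n=0$: here $\uparrow \! \!\mathcal{P}_{\ge 0}(R'[w'])=\mathcal{P}(W')$, so the choice $X'=\emptyset$ works, the forth clause ``$\forall x'\in X'\,\exists x\in X:xZx'$'' holding vacuously. For $n\ge 1$, unfolding the up-set yields a set $Y$ with $Y\subseteq X$, $Y\subseteq R[w]$ and $\abs{Y}\ge n$. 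Since $n$ is finite I shrink $Y$ to a subset $Y_0$ of cardinality \emph{exactly} $n$; this is the crucial move, because the tuple clauses are phrased for finite sets of a fixed size, whereas the up-set supplies only the lower bound $\ge n$. Now $Y_0\subseteq R[w]$, $\abs{Y_0}=n\ge 1$ and $\{w\}\mathcal{Z}_1\{w'\}$, so clause (5) produces $X'\subseteq R'[w']$ with $Y_0\mathcal{Z}_n X'$. By clause (3), $\abs{X'}=n$, so $X'\in\mathcal{P}_{\ge n}(R'[w'])\subseteq\,\uparrow \! \!\mathcal{P}_{\ge n}(R'[w'])$; and clause (7)(b) gives $\forall x'\in X'\,\exists x\in Y_0:\{x\}\mathcal{Z}_1\{x'\}$, i.e.\ $xZx'$. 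Since $Y_0\subseteq X$, this is exactly the forth requirement.

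For \textbf{(Back)}, the argument is symmetric: treat $n=0$ by taking the witness $X=\emptyset$, and for $n\ge 1$ extract from $X'\in\,\uparrow \! \!\mathcal{P}_{\ge n}(R'[w'])$ a subset $Y_0'\subseteq X'\cap R'[w']$ of size exactly $n$, invoke clause (6) (in place of clause (5)) to obtain $X\subseteq R[w]$ with $X\mathcal{Z}_n Y_0'$, and use clause (7)(a) (in place of (7)(b)) to get $\forall x\in X\,\exists x'\in Y_0':xZx'$, with $Y_0'\subseteq X'$.

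The only genuinely delicate point is the passage from ``$X$ lies in $\uparrow \! \!\mathcal{P}_{\ge n}(R[w])$'' to a witness of cardinality exactly $n$ sitting inside $X\cap R[w]$: once this finite witness is isolated, clauses (5)/(6) and (7) apply verbatim. I would also stress the separate handling of $n=0$, since clauses (5) and (6) carry the hypothesis $i\ge 1$ and therefore say nothing in that case, forcing the empty set to be used as the witness there.
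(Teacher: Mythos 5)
Your proof is correct and follows essentially the same route as the paper's: extract a subset of $R[w]$ of cardinality exactly $n$ inside $X$, apply clauses (5)/(6) and (3) to get a $\mathcal{Z}_n$-related set of size $n$ on the other side, and use clause (7) to verify the forth/back conditions. You are in fact slightly more thorough than the paper, which silently skips the $n=0$ case (where clauses (5) and (6) do not apply and the empty witness is needed) and the non-emptiness of $Z$.
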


\begin{proof}

({\bf Prop}) follows from item (4) of Definition \ref{defn:gbt}. As for ({\bf Forth}), assume that $wZw'$ and $X \in \uparrow \! \! \mathcal{P}_{\ge n}(R[w])$. Then there exists $Y \subseteq R[w]$ such that $Y\subseteq X$ and $\abs{Y}=n$. Since $\abs{Y} =n$ and $\{w\}\mathcal{Z}_1\{w'\}$, by items (5) and (3) there exists $Y' \subseteq R'[w']$, $\abs{Y'}=n$  and $Y\mathcal{Z}_n Y'$. It follows that $Y' \in \uparrow \! \! \mathcal{P}_{\ge n}(R'[w'])$. By item (7)(b), $\forall y' \in Y' \exists y\in Y: \{y\}\mathcal{Z}_1\{y'\}$. Since $Y\subseteq X$ and $xZy$ iff $\{x\}\mathcal{Z}_1\{y\}$, we have $\forall y' \in Y' \exists x\in X: xZy'$, which completes the proof of that $Z$ satisfies ({\bf Forth}). That  $Z$ satisfies ({\bf Back}) can be proved in a similar way.
\end{proof}

Now we show how to construct a graded tuple bisimulation out of a graded bisimulation, with the following lemmas:

\begin{lem}\label{lem:0128}
Let $\mathcal{M}$ and $\mathcal{M'}$ be Kripke models and $Z:\mathcal{M},w\leftrightarroweq_g \mathcal{M'},w'$.
\begin{itemize}
\item[(1)] If $u\in R[w]$, then there exists $u' \in R'[w']$ with $uZu'$.
\item[(2)] If $u' \in R'[w']$, then there exists $u \in R[w]$ with $uZu'$.
\end{itemize}
\end{lem}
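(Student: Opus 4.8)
The plan is to apply the $(\mathbf{Forth})$ and $(\mathbf{Back})$ clauses of Definition~\ref{defn:gradedbisi} to \emph{singletons} at the grade $n=1$, and then to unpack the up-set notation in order to extract the required successor. Throughout I use that $Z:\mathcal{M},w\leftrightarroweq_g \mathcal{M}',w'$ means $Z$ is a graded bisimulation with $wZw'$, so both clauses are available at the pair $w,w'$.

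For item (1), I would start from $u\in R[w]$ and consider the singleton $X=\{u\}$. Since $\{u\}\subseteq R[w]$ and $\abs{\{u\}}=1$, we have $\{u\}\in \mathcal{P}_{\ge 1}(R[w])\subseteq\, \uparrow \! \! \mathcal{P}_{\ge 1}(R[w])$. Applying $(\mathbf{Forth})$ with $n=1$ yields some $X'\subseteq W'$ with $X'\in\, \uparrow \! \! \mathcal{P}_{\ge 1}(R'[w'])$ and $\forall x'\in X'\,\exists x\in\{u\}:xZx'$, i.e.\ $uZx'$ for every $x'\in X'$. It then remains to produce an element of $X'$ that actually lies in $R'[w']$. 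By definition of the up-set there is $Y'\in \mathcal{P}_{\ge 1}(R'[w'])$ with $Y'\subseteq X'$; since $\abs{Y'}\ge 1$, the set $Y'$ is non-empty, so I can pick $u'\in Y'$. Then $u'\in R'[w']$ and $u'\in X'$, whence $uZu'$, as required.

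Item (2) is entirely symmetric: starting from $u'\in R'[w']$, take $\{u'\}\in\, \uparrow \! \! \mathcal{P}_{\ge 1}(R'[w'])$, apply $(\mathbf{Back})$ with $n=1$ to obtain $X\subseteq W$ with $X\in\, \uparrow \! \! \mathcal{P}_{\ge 1}(R[w])$ and $\forall x\in X\,\exists x'\in\{u'\}:xZx'$, and then extract $u\in R[w]$ from a witnessing $Y\in \mathcal{P}_{\ge 1}(R[w])$ with $Y\subseteq X$. The only point requiring care --- and the nearest thing to an obstacle --- is this unpacking of the up-set: the bisimulation clause only guarantees membership in $\uparrow \! \! \mathcal{P}_{\ge 1}(R'[w'])$, not inclusion in $R'[w']$, so one must descend to a generating set $Y'\subseteq X'$ sitting inside $R'[w']$ before selecting the successor. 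The grade choice $n=1$ is exactly what makes this generating set non-empty, which is what lets a single witness be picked.
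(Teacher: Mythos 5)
Your proof is correct and follows essentially the same route as the paper's: apply (\textbf{Forth}) (resp.\ (\textbf{Back})) to the singleton $\{u\}$ at grade $n=1$, then descend from the resulting member of $\uparrow \! \! \mathcal{P}_{\ge 1}(R'[w'])$ to a non-empty generating subset of $R'[w']$ to pick the witness $u'$. Your version merely spells out the up-set unpacking slightly more explicitly than the paper does.
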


\begin{proof}
(1) Since $u \in R[w]$, $\{u\} \in \uparrow \mathcal{P}_{\ge 1}(R[w])$. By ({\bf Forth}), there exists $Y'\in {\uparrow \!\! \mathcal{P}_{\ge 1}(R'[w'])}$ such that $\forall y' \in Y'\exists x\in \{u\}:xZy'$. It follows that $\forall y' \in Y': uZy'$. Since  $Y'\in \uparrow\!\! \mathcal{P}_{\ge 1}(R'[w'])$, there exists $u' \in R'[w']$ such that $u' \in Y'$. It follows that $u Zu'$.

Claim (2) can be proved in a similar way by using ({\bf Back}).
\end{proof}
Let $W$ and $W'$ be sets, $X\subseteq W$, $X'\subseteq W'$ and $Z\subseteq W\times W'$. Sets $X$ and $X'$ are called a \emph{$Z$-pair} if $\forall x\in X \exists x'\in X': xZx'$ and $\forall x' \in X' \exists x\in X: xZx' $.

\begin{lem}\label{lem:0303}
Let $\mathcal{M}$ and $\mathcal{M'}$ be Kripke models and $Z:\mathcal{M},w\leftrightarroweq_g \mathcal{M'},w'$.
\begin{itemize}
\item[(1)] If $X\subseteq R[w]$ and $\abs{X}=i \ge 1$, then there exists $X'\subseteq R'[w']$ with $\abs{X'}=i$ such that $X$ and $X'$ form a $Z$-pair.
\item[(2)] If $X'\subseteq R'[w']$ and $\abs{X'}=i \ge 1$, then there exists $X\subseteq R[w]$ with $\abs{X}=i$ such that $X$ and $X'$ form a $Z$-pair.
\end{itemize}
\end{lem}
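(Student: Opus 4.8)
By symmetry it suffices to prove item~(1); item~(2) follows by interchanging the two models and invoking Lemma~\ref{lem:0128}(2) together with the (\textbf{Back}) clause of Definition~\ref{defn:gradedbisi} in place of Lemma~\ref{lem:0128}(1) and (\textbf{Forth}). So fix $X \subseteq R[w]$ with $\abs{X} = i \ge 1$. My plan is to build $X'$ from two ingredients that pull in opposite directions: pointwise witnesses, which ensure that \emph{every} element of $X$ is matched but may supply too few points, and a single application of (\textbf{Forth}), which supplies \emph{enough} points on the $\mathcal{M}'$-side but need not match all of $X$. I would reconcile the two inside the set $T := \{\, y' \in R'[w'] : \exists x \in X \ (xZy') \,\}$ of all $R'[w']$-successors of $w'$ that are $Z$-related to some point of $X$.

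First I would show that $T$ covers $X$ and collect the witnesses. For each $x \in X$ we have $x \in R[w]$, so Lemma~\ref{lem:0128}(1) yields some $g(x) \in R'[w']$ with $x Z g(x)$; by definition $g(x) \in T$. Setting $S := \{\, g(x) : x \in X \,\}$ gives $S \subseteq T$ with $\abs{S} \le i$, and $S$ already forms a $Z$-pair with $X$ — the only defect being that possibly $\abs{S} < i$. Next I would bound $\abs{T}$ from below. Since $X \subseteq R[w]$ and $\abs{X} = i$, we have $X \in \uparrow\!\!\mathcal{P}_{\ge i}(R[w])$, so (\textbf{Forth}) produces some $X_0' \in \uparrow\!\!\mathcal{P}_{\ge i}(R'[w'])$ with $\forall x' \in X_0' \, \exists x \in X : xZx'$. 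Unfolding the up-set, there is $Y' \subseteq X_0'$ with $Y' \subseteq R'[w']$ and $\abs{Y'} \ge i$; as $Y' \subseteq X_0'$, each of its elements is $Z$-related to some point of $X$, whence $Y' \subseteq T$ and so $\abs{T} \ge i$.

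Finally, since $S \subseteq T$ with $\abs{S} \le i \le \abs{T}$, I would pick any $X'$ satisfying $S \subseteq X' \subseteq T$ and $\abs{X'} = i$, obtained by adjoining $i - \abs{S}$ arbitrary points of $T \setminus S$. Then $X' \subseteq R'[w']$ and $\abs{X'} = i$; the clause $\forall x' \in X' \, \exists x \in X : xZx'$ holds because $X' \subseteq T$; and the clause $\forall x \in X \, \exists x' \in X' : xZx'$ holds because each $x$ is witnessed by $g(x) \in S \subseteq X'$. Hence $X$ and $X'$ form a $Z$-pair of the required size. The one genuinely delicate point is exactly this last balancing act: forcing the cardinality to be \emph{exactly} $i$ while preserving \emph{both} halves of the $Z$-pair. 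The naive witness set $S$ secures the ``$\forall x \, \exists x'$'' direction but can be too small, whereas a size-$i$ subset of the (\textbf{Forth})-image secures the size and the ``$\forall x' \, \exists x$'' direction but can fail to match all of $X$; interposing $T$ and extending $S$ within $T$ is what makes both conditions hold simultaneously.
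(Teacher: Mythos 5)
Your proof is correct, but it takes a genuinely different route from the paper's. The paper proves item (1) by induction on $i$: it peels off one point $u$ from $X$, obtains a $Z$-pair $Y'$ for the remaining $i-1$ points by the induction hypothesis, and then must handle the collision case where the new witness $u'$ supplied by Lemma~\ref{lem:0128} already lies in $Y'$; that case splits further into two subcases, the harder of which re-invokes (\textbf{Forth}) to manufacture a fresh point $b''\in R'[w']\setminus Y'$ that is forced to be related to $u$. Your argument replaces this induction and case analysis with a single global construction: the set $T$ of all $R'[w']$-successors related to some point of $X$, a choice-function image $S\subseteq T$ that matches all of $X$ (via Lemma~\ref{lem:0128}(1)), and one application of (\textbf{Forth}) showing $\abs{T}\ge i$, after which any $X'$ with $S\subseteq X'\subseteq T$ and $\abs{X'}=i$ does the job. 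What your approach buys is economy and transparency: the collision problem that drives the paper's subcases simply disappears, because you never insist on extending $S$ by specific witnesses, only by arbitrary points of $T$, and membership in $T$ alone already guarantees the backward matching condition. The only detail you leave tacit is the cardinality bookkeeping $\abs{T\setminus S}\ge i-\abs{S}$ (immediate whether $T$ is finite or infinite, since $S\subseteq T$ and $\abs{T}\ge i$); and your reduction of item (2) to the mirror argument via (\textbf{Back}) and Lemma~\ref{lem:0128}(2) matches the paper's own treatment of that half.
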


\begin{proof}
(1) The proof is by induction on $i$. If $i=1$, we may assume that $X=\{u\}$. Since $X\subseteq R[w]$, we have $u\in R[w]$. By Lemma \ref{lem:0128}, there exists $u' \in R'[w']$ with $uZu'$. Let $X'=\{u'\}$. It follows that $\abs{X'}=1$ and that $X$ and $X'$ form a $Z$-pair.

Consider the case that $i>1$. We may assume that $X=\{u\} \cup Y$, where $Y\subseteq R[w]$ and $u\not \in Y$. It follows that $\abs{Y} = i -1\ge1$. By induction hypothesis, there exists an $Y' \subseteq R'[w']$ such that $\abs{Y'}=i-1$ and that $Y$ and $Y'$ forms a $Z$-pair. Since $u\in R[w]$, by Lemma \ref{lem:0128}, there exists $u' \in R'[w']$ with $u Zu'$. If $u'\not \in Y'$, let $X'= Y' \cup \{u'\}$. Then $\abs{X'}= i$ and $X$ and $X'$ forms a $Z$-pair. 

If $u'\in Y'$, there are two subcases: $\exists y \in Y \exists v'\in {R'[w'] \! \setminus \! Y'}: yZv'$ and for all $ y \in Y$ and  $v'\in R'[w']\! \setminus \! Y'$,  not $yZv'$. 

Consider the case that $\exists y \in Y \exists v'\in {R'[w'] \! \setminus \! Y'}: yZv'$. Let $X' = Y'\cup\{v'\}$. Then $\abs{X'}= i$. Since $Y$ and $Y'$ form a $Z$-pair, $uZu'$ and $yZv'$, $X$ and $X'$ form a $Z$-pair.

Consider the case that for all $ y \in Y$ and  $v'\in R'[w']\! \setminus \! Y'$, not $yZv'$.  Since $X\in \uparrow \!\!\mathcal{P}_{\ge i}(R[w])$, by ({\bf Forth}), there exists $B'\in \uparrow\!\! \mathcal{P}_{\ge i}(R'[w'])$ such that $\forall b' \in B'\exists x\in X:xZb'$. Since  $B'\in \uparrow\!\! \mathcal{P}_{\ge i}(R'[w'])$, there exists $B'' \subseteq B'$ such that $B''\subseteq R'[w']$ and $\abs{B''} \ge i$. Since $\abs{Y'}=i-1$, there exists $b''\in B''$ such that $b''\in R'[w'] \! \setminus \! Y'$.  Since for all $ y \in Y$ and  $v'\in R'[w']\! \setminus \! Y'$, not $yZv'$, we have for all $y \in Y$, not $yZb''$. Since $\forall b' \in B'\exists x\in X:xZb'$ and $X=\{u\} \cup Y$, we have $uZb''$. Let $X' =Y'\cup\{b''\}$. Then $\abs{X'}=i$. Since $Y$ and $Y'$ form a $Z$-pair and $uZb''$, $X$ and $X'$ form a $Z$-pair.

Claim (2) can be proved in a similar way by using ({\bf Back}).
\end{proof}

\begin{prop}\label{prop:1813}
Let $\mathcal{M}=(W,R,V)$ and $\mathcal{M}'=(W',R',V')$ be Kripke models and $Z\subseteq W\times W'$ a non-empty relation such that $Z: \mathcal{M}\leftrightarroweq_{g}\mathcal{M}'$. Define a tuple of relations $\mathcal{Z}=(\mathcal{Z}_1, \mathcal{Z}_2,\ldots)$ as: $\mathcal{Z}_1 =\{(\{w\},\{w'\} )\mid  w Z w'\}$, and $\mathcal{Z}_n =\{ (X,X')\mid \abs{X}=\abs{X'}= n,~ \text{$X$ and $X'$ form a $Z$-pair} \}$, for $n >1$.
 Then $\mathcal{Z}: \mathcal{M}\leftrightarroweq_{gt}\mathcal{M}'$.
\end{prop}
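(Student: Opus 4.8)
The plan is to check, one by one, the seven clauses defining a graded tuple bisimulation (Definition \ref{defn:gbt}) for the tuple $\mathcal{Z}$, drawing the two nontrivial clauses directly from Lemma \ref{lem:0303}.

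First I would dispatch clauses (1)--(4), which are immediate. Non-emptiness of $Z$ gives a pair $wZw'$, hence $(\{w\},\{w'\})\in\mathcal{Z}_1$, so $\mathcal{Z}_1\neq\emptyset$ (clause (1)). By construction each entry of $\mathcal{Z}_1$ is a pair of singletons and each entry of $\mathcal{Z}_n$ ($n>1$) is a pair of $n$-element sets, so all entries lie in $\mathcal{P}_{<\omega}(W)\times\mathcal{P}_{<\omega}(W')$ (clause (2)) and meet the cardinality requirement $\abs{X}=\abs{X'}=i$ (clause (3)). For clause (4), $\{w\}\mathcal{Z}_1\{w'\}$ unfolds to $wZw'$, whence (\textbf{Prop}) of Definition \ref{defn:gradedbisi} makes $w$ and $w'$ agree on proposition letters.

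Next I would treat the substantive clauses (5) and (6) via Lemma \ref{lem:0303}. For (5), assume $\{w\}\mathcal{Z}_1\{w'\}$, i.e.\ $wZw'$, together with $X\subseteq R[w]$ and $\abs{X}=i\ge1$; then $Z:\mathcal{M},w\leftrightarroweq_g\mathcal{M}',w'$, so Lemma \ref{lem:0303}(1) delivers $X'\subseteq R'[w']$ with $\abs{X'}=i$ forming a $Z$-pair with $X$. When $i>1$ this is exactly $X\mathcal{Z}_iX'$, and when $i=1$ a $Z$-pair of singletons $\{u\},\{u'\}$ is just $uZu'$, i.e.\ $X\mathcal{Z}_1X'$. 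Clause (6) is symmetric, using Lemma \ref{lem:0303}(2) (which rests on (\textbf{Back})).

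Finally, clause (7) is read off the definitions: if $X\mathcal{Z}_iX'$ then $X$ and $X'$ form a $Z$-pair (trivially when $i=1$), so $\forall x\in X\,\exists x'\in X': xZx'$ and $\forall x'\in X'\,\exists x\in X: xZx'$, and since $xZx'$ is by definition the same as $\{x\}\mathcal{Z}_1\{x'\}$, these are precisely (7)(a) and (7)(b). I expect no real obstacle in the proposition itself: all the genuine difficulty---namely producing a witness of \emph{exactly} the right cardinality $i$ rather than merely at least $i$, via the induction on $i$---has already been absorbed into Lemma \ref{lem:0303}. The proposition is then just the translation between the ``$Z$-pair'' packaging of graded bisimulation and the tuple packaging of graded tuple bisimulation.
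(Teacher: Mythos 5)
Your proposal is correct and follows essentially the same route as the paper's own proof: clauses (1)--(4) and (7) by unfolding the definitions of $\mathcal{Z}$ and of $Z$-pairs, and clauses (5)--(6) by invoking Lemma \ref{lem:0303}, whose induction carries all the real work. Your explicit treatment of the $i=1$ case (a $Z$-pair of singletons being the same as $\mathcal{Z}_1$-relatedness) is a small detail the paper leaves implicit, but it does not constitute a different approach.
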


\begin{proof}
Since $Z$ is non-empty, $\mathcal{Z}_1$ is non-empty. So item (1) in Definition \ref{defn:gbt} is satisfied.
Items (2), (3) and (4)  are satisfied by the definition of $Z$. Items (5) and (6) are satisfied by Lemma \ref{lem:0303}. Item (7) is satisfied by the definition of $\mathcal{Z}_i$ and the definition of $Z$-pairs.
\end{proof}

In summary, we showed how to construct a graded bisimulation out of a graded tuple bisimulation (Prop.~\ref{prop:1812}), and vice versa (Prop.~\ref{prop:1813}). Hence, graded bisimulation (Def.~\ref{defn:gradedbisi}) and graded tuple bisimulation (Def.~\ref{defn:gbt}) are equivalent. Another notion of bisimulation called \emph{resource bisimulation} was proposed in \cite{aceto2010resource}, which is very similar to the notion later proposed in \cite{ma2016dynamic}. A precise comparison of  graded bisimulation to these notions is left for future research.

\section{Conclusion}\label{sec:conclusion}
Inspired by graded models, we proposed a class of graded neighbourhood frames, and we showed that the axiomatiziation $\mbf{GrK}$ is sound and strongly complete for this class. We further showed that graded neighbourhood frames are first-order definable but not modally definable. We also obtained a new definition of graded bisimulation building upon the notion of monotonic bisimulation, where some details concerning resource bisimulation are left for further research. Our results show that techniques for monotonic modal logics can be successfully applied to graded modal logics. 

There are many options for further research:
 
(1) Using the approach developed in this paper, updating neighbourhood models \cite{ma2018update} can be compared to updated graded models \cite{ma2016dynamic}. 

(2) Building on multi-type display calculi for monotonic logics \cite{chen2019non} we plan to introduce multi-type display calculi for graded modal logic. 

(3) With yet another notion of bisimulation on graded frames, and algorithms to calculate two-sorted first-order correspondence on neighbourhood frames \cite{hansen2003monotonic,chen2019non}, we plan to get two-sorted first-order correspondence on graded frames.

(4) Finally, given the logic $\mathbf{GrK}$ in Section \ref{sec:2} for $n$ grades, and given its alternative incarnation as a monotonic modal logic in Section \ref{sec:3}, we wish to find the axiomatization of the graded modal logic for one grade. In Proposition~\ref{prop.gradmon} we showed that $(RM_n)$ is admissible in $\mbf{GrK}$. As $\mbf{GrK}$ only has necessitation for $\Box$, this is indeed of some minor interest. We can also pose this question in the other direction: is $\mbf{GrK}$ derivable in some extension of $\mbf{M}_\mbb{N}$, that makes the monotonic character of the logic clearer? Because of the axioms $(Ax4)$,  $(Ax5)$ and $(Ax6)$, we should not expect this to be without interaction axioms for different modalities. However, an interesting case is graded modal logic for a single modality $\D_n$: is there a monotonic modal logic axiomatizing this case, without interaction axioms? This logic should contain $\D_n\bot \leftrightarrow\bot$, corresponding to the requirement  that for all states $w$ in the domain of a model, $\emptyset \notin \nu_n(w)$. 
Such a logic should also contain, for example, $(\D_n \phi \wedge \D_n \neg \phi) \imp (\D_n \psi \vee \D_n \neg \psi)$. It is easy to see that this is valid in $\mbf{GrK}$. However, $(\D_n \phi \wedge \D_n \neg \phi) \imp (\D_n \psi \vee \D_n \neg \psi)$ is not derivable in monotone modal logic, as there are models of monotone modal logic in which it is false. 
\weg{Let $n$ in $\D_n$ be $2$, and consider $\mathbb M$ consisting of domain $\{a,b,c,d\}$ with $V(p)= \{a,b\}$, $V(q) = \{a,c\}$, and let $\nu(a) = \{\{a,b\}, \{c,d\}\}$ (the neighbourhoods of other points do not matter in the proof).  We now have:
\begin{itemize}
\item $\mathbb M, a \Vdash \D_2 p$ because $p$ is true in $\{a,b\}\subseteq\llbracket \neg p \rrbracket_{\mathbb{M}}$.
\item $\mathbb M, a \Vdash \D_2 \neg p$ because $\neg p$ is true in $\{c,d\}\subseteq\llbracket \neg p \rrbracket_{\mathbb{M}}$.
\item $\mathbb M, a \Vdash \neg\D_2 q$ because neither $\{a,b\}\subseteq\llbracket q \rrbracket_{\mathbb{M}} (= \{a,c\})$ nor $\{c,d\}\subseteq\{a,c\}$ (nor any of the supersets of $\{a,b\}$ and $\{c,d\}$).
\item similarly, $\mathbb M, a \Vdash \neg\D_2 \neg q$ because neither $\{a,b\}\subseteq\llbracket q \rrbracket_{\mathbb{M}} (= \{b,d\})$ nor $\{c,d\}\subseteq\{b,d\}$ (nor any of the supersets of $\{a,b\}$ and $\{c,d\}$).
\end{itemize}
}
We leave the axiomatization of single-grade graded modal logic for future research.

\bibliographystyle{BSLbibstyle}
\bibliography{ref}

\end{document}